\newtheorem{thm}{Theorem}[section]
 \newtheorem{prop}[thm]{Proposition}
 \theoremstyle{definition}
 \theoremstyle{remark}
 \newtheorem{rem}[thm]{Remark}
 \numberwithin{equation}{section}
\def\be#1 {\begin{equation} \label{#1}}
\newcommand{\ee}{\end{equation}}
\def\sqw{\hbox{\rlap{\leavevmode\raise.3ex\hbox{$\sqcap$}}$%
\sqcup$}}
\def\findem{\ifmmode\sqw\else{\ifhmode\unskip\fi\nobreak\hfil
\penalty50\hskip1em\null\nobreak\hfil\sqw
\parfillskip=0pt\finalhyphendemerits=0\endgraf}\fi}
\title{Growth of solutions to NLS on irrational tori}
\author{Yu Deng and Pierre Germain}
\thanks{P. G. is partially supported by NSF grant DMS-1501019.}
\begin{document}

\maketitle

\begin{abstract} We prove polynomial bounds on the $H^s$ growth for the nonlinear Schr\"odinger equation set on a torus, in dimension 3, with super-cubic and sub-quintic nonlinearity. Due to improved Strichartz estimates, these bounds are better for irrational tori than they are for rational tori.
\end{abstract}

\section{Introduction}

\subsection{Growth of Sobolev norms for NLS}

Consider the nonlinear Schr\"odinger equation $(i\partial_t+\Delta)u=|u|^{p-1}u$ set on a torus. In subcritical cases ($p<\frac{d+2}{d-2}$), and for the torus $\mathbb{T}^d = \mathbb{R}^d / \mathbb{Z}^d$, the global existence of finite energy ($H^1$) solutions is known since the foundational work of Bourgain~\cite{B1}. Furthermore, the equation propagates regularity: if the data is smoother, say in $H^s$ for $s>1$, then $u(t) \in H^s$ for all $t$.

The next question is to understand the qualitative behavior of the solution, and first of all: how fast may the $H^s$ norm grow? This question is related to the phenomenon of \emph{weak turbulence} which is generally described as the solution transferring energy to higher and higher frequencies, causing the $H^s$ norm to grow while the $H^1$ norm remains bounded. The growth rate of $H^s$ norm can be seen as a control of how fast this energy transfer is happening.

In general, it is easy to obtain an exponential upper bound for the $H^s$ norm, by iterating local in time theory. In \cite{B2}, using his ``high-low method'', Bourgain was first able to improve this to a bound that is polynomial in time, in the case of a cubic nonlinearity in $2D$ and $3D$. Further improvements, and extension to other dimensions and nonlinearities have since been made in ~\cite{CKO, Soh1, Soh2, Staf}; see also ~\cite{Del, GG, PTV, Soh3, Th,Z} and the references therein for other dispersive models.

The recent work of Bourgain and Demeter~\cite{BD}, which proves optimal Strichartz estimates for the linear Schr\"odinger equation on irrational tori, opened the door to new questions for the nonlinear problem. It also enabled the authors, together with Guth~\cite{DGG}, to show that irrational tori enjoy better Strichartz estimates on long time intervals than rational tori.

The aim of the present paper is to show that these linear estimates can be used to obtain improved $H^s$ growth bounds for nonlinear Schr\"{o}dinger equations on irrational tori, compared with rational tori (see also the recent work \cite{FSWW}, where the authors study the nonlinear Schr\"{o}dinger equation on irrational tori from a different aspect). More precisely, we will consider the nonlinear Schr\"iodinger equation, in dimension $d=3$, with power $3<p<5$.

\subsection{Main results}
Consider a $3D$ super-cubic sub-quintic, defocusing nonlinear Schr\"{o}dinger equation,
\begin{equation}\label{nls0}(i\partial_t+\Delta)u=|u|^{p-1}u,\quad 3<p<5,\end{equation} on $\mathbb{R}\times\mathbb{T}_{\ell}^3$, where $\mathbb{T}_{\ell}^3$ is a rectangular torus\[\mathbb{T}_\ell^3=[0,\ell_1]\times[0,\ell_2]\times[0,\ell_3],\quad \ell=(\ell_1,\ell_2,\ell_3).\] One can prove that (\ref{nls0}) is globally well-posed in $H^1(\mathbb{T}_{\ell}^3)$ (see Proposition \ref{locale} below) with conserved energy\[E_{\ell}[u]= \int_{\mathbb{T}_{\ell}^3}\left(\frac{1}{2}|\nabla u|^2+\frac{1}{p+1}|u|^{p+1}\right)\,\mathrm{d}x.\] Now, consider a solution $u$ to (\ref{nls0}) such that $u(0)\in H^2(\mathbb{T}_{\ell}^3)$; by preservation of regularity one can show that $u(t)\in H^2$ for all time (see Proposition \ref{locale}). We are interested in controlling the possible growth of the $H^2$ norm of $u$. We will prove the following: \begin{thm}\label{main} Suppose $u$ is a solution to (\ref{nls0}) with energy $E_{\ell}[u]=E$ and $\|u(0)\|_{H^2(\mathbb{T}_{\ell}^3)}=A$. Then we have the following.

(1) For any choice of $\ell$, we have \begin{equation}\label{growth}\|u(t)\|_{H^2(\mathbb{T}_{\ell}^3)}\lesssim A+(1+|t|)^{\frac{2}{5-p}+\delta}\end{equation} for any time $t$, and any $\delta>0$. Here and below all implicit constants will depend on $\ell$, $E$, $p$ and $\delta$, but not on $A$ or $t$.

(2) For generic choice of $\ell$, i.e. excluding a subset of $(\mathbb{R}^+)^3$ with measure $0$, we have \begin{equation}\label{growth2}\|u(t)\|_{H^2(\mathbb{T}_{\ell}^3)}\lesssim A+(1+|t|)^{\frac{2}{5-p+\theta(p)}}\end{equation} for any time $t$, where \begin{equation}\label{deftheta}\theta(p)=\frac{\min(p-3,5-p)}{182},\end{equation} which is positive for $3<p<5$.
\end{thm}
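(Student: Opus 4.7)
The plan is to follow the iteration scheme introduced by Bourgain~\cite{B2}. The heart of the argument is a short-time $H^2$-increment bound of the form
\begin{equation*}
\|u(t_0+\tau)\|_{H^2}^2 - \|u(t_0)\|_{H^2}^2 \,\leq\, C\bigl(\|u(t_0)\|_{H^2}^{2-\eta} + 1\bigr),
\end{equation*}
valid on intervals of fixed length $\tau=\tau(E)$, where the ``saving'' $\eta>0$ is provided by Strichartz estimates. Iterating over $N\approx |t|/\tau$ consecutive intervals and applying a discrete Gronwall-type comparison (whose continuous analogue is $(x^2)'\sim x^{2-\eta}$) yields the polynomial bound $\|u(t)\|_{H^2}\lesssim A+|t|^{1/\eta}$. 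Matching this with (\ref{growth}) requires $\eta$ arbitrarily close to $(5-p)/2$ from below, while (\ref{growth2}) requires $\eta=(5-p+\theta(p))/2$.

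The preliminary ingredient is the local theory: energy conservation gives $\|u(t)\|_{H^1}\lesssim_E 1$ uniformly in $t$, and combined with the Bourgain-Demeter Strichartz estimates~\cite{BD} this controls all the relevant space-time norms of $u$ on every window of length $\tau$, in terms of $E$ and $\ell$ only. Propagation of regularity then produces $\|u\|_{L^\infty_t H^2(I)}\lesssim\|u(t_0)\|_{H^2}$ on $I=[t_0,t_0+\tau]$, which is the only way $\|u(t_0)\|_{H^2}$ enters the increment computation.

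For the increment itself, I compute $\tfrac{d}{dt}\|\Delta u\|_{L^2}^2$ and integrate over $I$; after a paraproduct / fractional-Leibniz decomposition of $\Delta(|u|^{p-1}u)$, each resulting term has the structure of a product of $p+1$ functions in which at most two derivatives fall on a single factor. That heaviest factor goes into $L^\infty_t L^2_x$, producing one copy of $\|u(t_0)\|_{H^2}$, while the remaining $p$ factors are estimated by H\"older in space combined with Strichartz in time. A positive power $\tau^\alpha$ is gained because the available Strichartz exponents are strictly stronger than scaling requires; interpolating the non-top factors between $H^1$ (bounded by $E$) and $H^2$ (bounded by the present $\|u\|_{H^2}$) converts that $\tau^\alpha$ into the saving $\eta$. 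For Part~(1) the estimates of~\cite{BD} give $\eta$ arbitrarily close to $(5-p)/2$, up to the $\delta$-loss needed to absorb logarithmic factors at Strichartz endpoints. For Part~(2) I replace \cite{BD} by the long-time Strichartz estimates on generic tori from~\cite{DGG}, which enlarge $\alpha$; tracing their quantitative exponents through the multilinear estimate is what produces the explicit denominator $182$ in (\ref{deftheta}).

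The main obstacle is executing the multilinear estimate with the sharp value of $\eta$. Because $p$ is non-integer, $|u|^{p-1}u$ is not polynomial in $u$ and $\bar u$, so the analysis must combine a fractional chain rule with interpolation between integer-power estimates, and the choice of Strichartz exponents must be optimized to maximize $\eta$. A further subtlety specific to Part~(2) is that the gain in~\cite{DGG} is only visible on a particular length scale of time window, so $\tau$ must be chosen to match that scale, and the measure-zero exceptional set of $\ell$ is inherited from~\cite{DGG}. Verifying that all these choices are mutually compatible, and that the iteration produces no logarithmic loss beyond the $\delta$ of Part~(1), is the delicate bookkeeping at the core of the argument.
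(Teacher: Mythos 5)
Your plan to iterate a short-time increment bound is structurally right, and part~(2)'s reliance on the long-time Strichartz estimates of~\cite{DGG} is exactly what the paper does. But the mechanism you propose for the increment --- directly differentiating $\|\Delta u\|_{L^2}^2$ and estimating --- has a gap that the paper specifically avoids by using the I-method.

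When you compute $\frac{d}{dt}\|\Delta u\|_{L^2}^2 = 2\,\Im \int \Delta(|u|^{p-1}u)\,\overline{\Delta u}$, the term $\partial_z(|u|^{p-1}u)\,\Delta u\cdot\overline{\Delta u}=\tfrac{p+1}{2}|u|^{p-1}|\Delta u|^2$ indeed has vanishing imaginary part. But the companion term $\partial_{\bar z}(|u|^{p-1}u)\,\Delta\bar u\cdot\overline{\Delta u} = \tfrac{p-1}{2}|u|^{p-3}u^2(\overline{\Delta u})^2$ does \emph{not}: its coefficient $|u|^{p-3}u^2$ is not real, so $\Im\int |u|^{p-3}u^2(\overline{\Delta u})^2$ survives. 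This term contains \emph{two} copies of $\Delta u$, hence two copies of $\|u\|_{H^2}$, and so the increment is at best $\lesssim \tau\,\|u\|_{H^2}^2$. That gives exponential, not polynomial, growth; there is no room to produce the $\|u\|_{H^2}^{2-\eta}$ you need, regardless of how one optimizes Strichartz exponents on the lower-order factors. Your step ``that heaviest factor goes into $L^\infty_t L^2_x$, producing one copy of $\|u(t_0)\|_{H^2}$'' implicitly assumes only one top-order factor is present; that is false.

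The paper gets around this precisely by not differentiating $\|\Delta u\|_{L^2}^2$. It defines a Fourier multiplier $\mathcal D$ with $\mathcal D=1$ below frequency $N$ and $\mathcal D=|\nabla|$ above, and tracks the \emph{modified NLS energy} $E[\mathcal D u]=\frac12\|\nabla \mathcal D u\|_{L^2}^2+\frac1{p+1}\|\mathcal D u\|_{L^{p+1}}^{p+1}$. Because this has the same algebraic form as the conserved energy, the top-order quadratic-in-$\Delta u$ contributions cancel exactly, and what remains is proportional to the commutator $\mathcal R=\mathcal D(|u|^{p-1}u)-|\mathcal D u|^{p-1}\mathcal D u$. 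This commutator is supported (up to tails) at frequencies $\gtrsim N$, and it is precisely the factor $\|P_{\gtrsim N}\mathcal D u\|_{L^{10/(6-p)+}_{t,x}}\lesssim N^{(p-5)/2+}$ that produces the polynomial gain. In other words, the gain is not ``Strichartz being stronger than scaling'' interpolated against $H^2$ --- it is a frequency-localization gain built into the commutator structure of the modified energy. To salvage your approach you would need to replace $\|\Delta u\|_{L^2}^2$ by an almost-conserved modified energy (the I-method functional, or equivalently a Bourgain high-low splitting), which is a genuinely different and essential ingredient rather than bookkeeping.

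Two smaller points: for a non-integer $p$, the paraproduct/fractional-Leibniz step also needs the nonlinear lemma (Proposition~\ref{nonlem}) rather than a standard paraproduct, since $|u|^{p-1}u$ is not polynomial; and in part~(2), the $[0,\varepsilon]$-window cannot simply be enlarged to the $N^\gamma$-window of \cite{DGG}, because the energy argument only controls the solution on unit intervals --- one has to perturbatively transfer the long-time linear Strichartz bound to the nonlinear solution across $N^\gamma$ many unit intervals, which is what Proposition~\ref{long} does and is where most of the technical work in the irrational case lies.
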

\begin{rem} Our choice of $\theta(p)$ is clearly far from optimal, and the result is easily extended to $1< s\leq 2$ (and to $s>2$ if one regularizes the nonlinearity). The point here is that generic irrational tori enjoy strictly better estimates, in terms of growth of higher Sobolev norms of solutions to nonlinear Schr\"{o}dinger equations, than rational ones. 
\end{rem}
\begin{rem}Theorem \ref{main} is actually true assuming some weaker Diophantine condition for $\ell$, for example when\[\left|\frac{n_1}{\ell_1^2}+\frac{n_2}{\ell_2^2}+\frac{n_3}{\ell_3^2}\right|\geq C^{-1}(|n_1|+|n_2|+|n_3|)^{-4}\] for integers $n_1$, $n_2$, $n_3$, not all zero (for generic $\ell$ the exponent $4$ can be replaced by $2+\delta$); in particular Theorem \ref{main} is true for $\ell=(1,\sqrt[4]{2},\sqrt[4]{3})$.
\end{rem}

\subsection{Idea of the proof}
We describe here the main idea of the paper; for relevant notations and spaces see Sections \ref{notat} and \ref{seclin}.

Bourgain's original proof of polynomial growth uses a high-low decomposition; here we shall use a variation of this idea that is technically more convenient, namely the (upside down) \emph{I-method} developed by Colliander, Keel, Staffilani, Takaoka and Tao. For a reference on $I$-method, see for example \cite{CKSTT}.

In order to prove an upper bound \[\|u(t)\|_{H^2}\lesssim\max(\|u(0)\|_{H^2},(1+|t|)^{1/\theta}),\] it suffices to show that\begin{equation}\label{need}\text{If $\|u(t)\|_{H^2}\lesssim N$, then $\|u(t')\|_{H^2}\lesssim N$ for $t\leq t'\leq t+N^{\theta}$.}\end{equation} Fix a scale $N$, define a multiplier $\mathcal{D}$ (see (\ref{mult}) below) such that $\mathcal{D}=1$ for frequencies $\ll N$ and $\mathcal{D}=|\nabla|$ for frequencies $\gtrsim N$, then using conservation of energy, we see that $\|u(t)\|_{H^2}\lesssim N$ if and only if $E_{\ell}[\mathcal{D}u]\lesssim 1$.

 The idea of the $I$-method is then to control the increment of the energy $E_{\ell}[\mathcal{D}u]$. By the structure of the equation (\ref{nls0}), one has that\footnote{Strictly speaking $(\mathcal{D}u)^{p-1}$ should be a $p-1$-homogeneous function of $\mathcal{D}u$, but these are just trivial differences.}\[E_{\ell}[\mathcal{D}u(t_2)]-E_{\ell}[\mathcal{D}u(t_1)]\sim\int_{t_1}^{t_2}\int_{\mathbb{T}_{\ell}^3}(\nabla\mathcal{D}u)^2(\mathcal{D}u)^{p-1}\,\mathrm{d}x\mathrm{d}t.\] Moreover,  the integrand above vanishes (or asymptotically vanishes) if the frequencies involved in $(\mathcal{D}u)^{p-1}$ are all $\ll N$, since since $\mathcal{D}=1$ for frequencies $\lesssim N$ and that $E_{\ell}[u]$ is conserved for (\ref{nls0}).

Therefore, one actually has that \begin{equation}\label{incre0}E_{\ell}[\mathcal{D}u(t_2)]-E_{\ell}[\mathcal{D}u(t_1)]\sim\int_{t_1}^{t_2}\int_{\mathbb{T}_{\ell}^3}(\nabla\mathcal{D}u)^2(P_{\gtrsim N}\mathcal{D}u)(\mathcal{D}u)^{p-2}\,\mathrm{d}x\mathrm{d}t.\end{equation} Suppose $E_{\ell}[\mathcal{D}u(t)]$ is bounded, then by local theory one can bound $\mathcal{D}u$ in $X^{1,1/2+}$ locally (see Section \ref{notat} for notations and (\ref{xsb}) for the definition of $X^{s,b}$ spaces). By H\"{o}lder one has that\[|E_{\ell}[\mathcal{D}u(t_2)]-E_{\ell}[\mathcal{D}u(t_1)]|\lesssim\|\nabla\mathcal{D}u\|_{L_{t,x}^{10/3-}}^2\|P_{\gtrsim N}\mathcal{D}u\|_{L_{t,x}^{10/(6-p)+}}\|\mathcal{D}u\|_{L_{t,x}^{10-}}^{p-2},\] and by Strichartz one has \[\|\nabla\mathcal{D}u\|_{L_{t,x}^{10/3-}}\lesssim \|\mathcal{D}u\|_{X^{1,1/2+}},\quad \|\mathcal{D}u\|_{L_{t,x}^{10-}}\lesssim \|\mathcal{D}u\|_{X^{1,1/2+}}; \quad\|\mathcal{D}u\|_{L_{t,x}^{10/(6-p)+}}\lesssim N^{(p-5)/2+}\|\mathcal{D}u\|_{X^{1,1/2+}}\] on time intervals of length $1$, which implies that \begin{equation}\label{bootrat}\text{If $E_{\ell}[\mathcal{D}u(t_1)]\lesssim1$, then $|E_{\ell}[\mathcal{D}u(t_2)]-E_{\ell}[\mathcal{D}u(t_1)]|\lesssim N^{(p-5)/2+}$, when $t_2-t_1\leq 1$.}\end{equation} Note that the gain $N^{(p-5)/2}$ relies precisely on the subcritical nature of (\ref{nls0}) when $p<5$. By iteration, this then implies (\ref{need}) for $\theta=(5-p)/2-$.

The above is what happens for rational tori; for (generic) irrational tori, one can resort to the long-time Strichartz estimate, which is proved in \cite{DGG}:\[\|e^{it\Delta}P_{\gtrsim N}\mathcal{D}f\|_{L_{t,x}^{10/(6-p)+}}\lesssim N^{(p-5)/2+}\|\mathcal{D}f\|_{H^1},\] on an interval of length $N^{\gamma}$, where $\gamma$ is sufficiently small. This in particular implies the bound\[\sum_{|m|\leq N^\gamma}\|e^{it\Delta}P_{\gtrsim N}\mathcal{D}u(t_1)\|_{L_{t,x}^{10/(6-p)+}([m,m+1])}\lesssim N^{(p-5)/2+\gamma+}N^{-\gamma(6-p)/10}\] for the linear solution $e^{it\Delta}\mathcal{D}u(t_1)$; one can then show that the same bound is true for the nonlinear solution $\mathcal{D}u$ also, again thanks to the subcritical nature of (\ref{nls0}). Plugging this into (\ref{incre0}), we get that \begin{equation}\label{bootirat}\text{If $E_{\ell}[\mathcal{D}u(t_1)]\lesssim1$, then $|E_{\ell}[\mathcal{D}u(t_2)]-E_{\ell}[\mathcal{D}u(t_1)]|\lesssim N^{(p-5)/2+\gamma+}N^{-\gamma(6-p)/10}$, when $t_2-t_1\leq N^{\gamma}$,}\end{equation} in place of (\ref{bootrat}). Iterating (\ref{bootirat}) we get that (\ref{need}) holds for \[\theta=\gamma+\frac{5-p}{2}-\gamma+\frac{(6-p)\gamma-}{10}=\frac{5-p}{2}+\frac{(6-p)\gamma}{10}-,\] which improves upon the rational case.
\subsection{Notations}\label{notat}
For a function $f$ on $\mathbb{R} \times \mathbb{T}^3$, and $(\tau,k) \in \mathbb{R} \times \mathbb{Z}^3$, let
$$
\widehat{f}(\tau,k) = \int_{\mathbb{R} \times \mathbb{T}^3} f(t,x) e^{-2\pi i (\tau t + k \cdot x)}\,\mathrm{d}x\,\mathrm{d}t.
$$ 
For $N$ a dyadic number, let $P_N$, $P_{<N}$ be the standard Littlewood-Paley projections; moreover for any set $B\subset\mathbb{R}^3$ which is a ball, an annulus, a cube or a rectangular cuboid, we shall define the projection $P_B$ in a similar way as $P_N$. For any $r\in\mathbb{R}$, we say a function $F(z):\mathbb{C}\mapsto\mathbb{C}$ is of type $r$, if\[\left|\partial_z^m\partial_{\overline{z}}^nF\right|\lesssim_{m,n} |z|^{r-m-n}\] for all $m,n\geq 0$; we denote by $F_r$ a general function of type $r$. For example, $|z|^rz^m(\overline{z})^n$ is of type $r+m+n$ for any $r\in\mathbb{R}$ and $m,n\in\mathbb{Z}$.

For any fixed scale $N$, define the multiplier $\mathcal{D}$ to be
\begin{equation}\label{mult}\widehat{\mathcal{D}u}(k)=m(k)\widehat{u}(k),\quad m(k)=\theta(k/N),\end{equation} where $\theta=\theta(y)$ is a smooth even function such that $\theta(y)=1$ for $|y|\leq 1$ and $\theta(y)=|y|$ for $|y|\geq 2$.

We will use $\chi$ to denote general cutoff functions: compactly supported, and equal to one in a neighborhood of zero. 

We write $O(1)$ for a constant, and $A \lesssim B$ if there exists a constant $C$ such that $A \leq CB$. We will use $o(1)$ to denote any quantity that can be chosen arbitrarily small, and denote by $a+$ (or $a-$) anything larger (or smaller) than $a$ that is $o(1)$ close to $a$.

\section{Preparations} 

\subsection{Change of variables} First note that, by a change of variables, one can reduce (\ref{nls0}) to the equation \begin{equation}\label{nls}(i\partial_t+\Delta_{\beta})u=|u|^{p-1}u,\end{equation} on $\mathbb{R}\times \mathbb{T}^3$, where $\mathbb{T}^3=[0,1]^3$ is the standard square torus, and $\Delta_{\beta}$ is the ``anisotropic'' Laplacian\begin{equation}\label{rect}\Delta_{\beta}=\beta_1\partial_{x_1}^2+\beta_2\partial_{x_2}^2+\beta_3\partial_{x_3}^2,\quad\beta_i=\ell_i^{-2}.\end{equation} Note that the mapping from $\ell$ to $\beta=(\beta_i)$ preserves zero measure sets, thus preserves genericity. The corresponding conserved energy for (\ref{nls}) is \begin{equation}\label{conserve}\qquad E_{\beta}[u]=\int_{\mathbb{T}^3}\left(\frac{1}{2}\sum_{i=1}^3\beta_i|\partial_i u|^2+\frac{1}{p+1}|u|^{p+1}\right)\,\mathrm{d}x,\end{equation}  which, for simplicity, will be written as $E[u]$ from now on.
\subsection{Linear estimates}\label{seclin} Recall the definition of $X^{s,b}$ and (for an interval $I$ of $\mathbb{R}$) $X^{s,b,I}$ spaces
\begin{equation}\label{xsb}\|u\|_{X^{s,b}}^2=\sum_{k\in\mathbb{Z}^3}\int_{\mathbb{R}}\langle k\rangle^{2s}\langle\tau+ 2\pi Q(k)\rangle^{2b}|\widehat{u}(k,\tau)|^2\,\mathrm{d}\tau,\end{equation}
\begin{equation}\label{xsbi}\|u\|_{X^{s,b,I}}=\inf_{g:\,g\equiv f\,\text{on}\,I}\|g\|_{X^{s,b}},\end{equation} where $Q(k)=\beta_1k_1^2+\beta_2k_2^2+\beta_3k_3^2$. We have the following linear estimates.
\begin{prop} \label{linear} Let $\chi$ be a smooth cutoff. Parts (1)$\sim$(5) below hold for all $\beta=(\beta_i)$, and part (6) holds for generic $\beta_i$.

(1) For all $s,b\in\mathbb{R}$, one has \begin{equation}\label{lin1}\|\chi(t)u\|_{X^{s,b}}\lesssim\|u\|_{X^{s,b}},\quad \|\chi(t)e^{it\Delta_{\beta}}f\|_{X^{s,b}}\lesssim\|f\|_{H^s};\end{equation}

(2) For $\varepsilon<1$ and $-1/2<b\leq b'<1/2$, one has \begin{equation}\label{lin2}\|\chi(\varepsilon^{-1}t)u\|_{X^{s,b}}\lesssim\varepsilon^{b'-b}\|u\|_{X^{s,b'}};\end{equation}

(3) For $1/2<b<1$, one has \begin{equation}\label{lin3}\left\|\chi(t)\int_0^t e^{i(t-t')\Delta_{\beta}}(N(t'))\,\mathrm{d}t'\right\|_{X^{s,b}}\lesssim\|N\|_{X^{s,b-1}};\end{equation}

(4) For $q\geq 10/3$ and $b>1/2$, one has that
\begin{equation}\|P_{N}u\|_{L_{t,x}^q([0,1]\times\mathbb{T}^3)}\lesssim N^{\frac{3}{2}-\frac{5}{q}+}\|P_Nu\|_{X^{0,b}}.\end{equation} Moreover, the same bound holds if one replaces $P_Nu$ by $P_{B}u$, where $B\subset\mathbb{R}^3$ any cube of size $N$.

(5) For $q\geq 10/3$ and $b>1/2$, one has that \begin{equation}\|P_{R}u\|_{L_{t,x}^q([0,1]\times\mathbb{T}^3)}\lesssim N^{\frac{3}{2}-\frac{5}{q}+}\left(\frac{M}{N}\right)^{\frac{1}{2}-\frac{5}{3q}}\|P_Ru\|_{X^{0,b}},\end{equation} where $R\subset\mathbb{R}^3$ is any rectangular cuboid of dimensions $N\times N\times M$ with $M\leq N$.

(6) For generic $\beta=(\beta_i)$, and $q>10/3$ and $b>1/2$, one has that \begin{equation}\|P_{N}u\|_{L_{t,x}^q([0,N^{\nu(q)}]\times\mathbb{T}^3)}\lesssim N^{\frac{3}{2}-\frac{5}{q}+}\|P_Nu\|_{X^{0,b}},\end{equation} where \[\nu(q)=\left\{\begin{aligned}
&\frac{4(3q-10)}{3q+14},&q<6,\\
&4,&q\geq 6.\end{aligned}\right.\]
\end{prop}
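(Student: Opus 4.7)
The plan is to dispatch the six parts in three groups of increasing depth.

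Parts (1)--(3) are the standard Bourgain $X^{s,b}$ calculus; detailed proofs appear in \cite{CKSTT}. Working on the space-time Fourier side, where
$\|u\|_{X^{s,b}}^2 = \sum_k \int \langle k\rangle^{2s}\langle \tau+2\pi Q(k)\rangle^{2b}|\widehat{u}(k,\tau)|^2\,\mathrm{d}\tau$,
multiplication by a smooth time cutoff becomes convolution in $\tau$ against a Schwartz kernel, which essentially preserves the weight (giving (1)); rescaling the cutoff produces the $\varepsilon^{b'-b}$ gain via the standard localization lemma valid for $-1/2<b\leq b'<1/2$ (giving (2)); and conjugating the Duhamel integral by $e^{-it\Delta_\beta}$ reduces (3) to a one-dimensional convolution estimate in $\tau$.

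Part (4) is the short-time Strichartz at the Bourgain--Demeter scale. At the endpoint $q=10/3$, the $\ell^2$-decoupling theorem of \cite{BD} applied to the paraboloid yields
$\|e^{it\Delta_\beta}P_N f\|_{L^{10/3}_{t,x}([0,1]\times\mathbb{T}^3)}\lesssim N^{0+}\|P_N f\|_{L^2}$,
uniformly in $\beta$; passing to $X^{0,b}$ with $b>1/2$ uses the standard atomic decomposition of $X^{0,b}$ into modulated free solutions. Interpolating against the Bernstein--Sobolev bound $\|P_N u\|_{L^\infty_{t,x}}\lesssim N^{3/2}\|P_N u\|_{X^{0,b}}$ gives the stated $N^{3/2-5/q+}$ for all $q\geq 10/3$. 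Replacing $P_N$ by $P_B$ for any cube $B$ of side $N$ is a Galilean-type change of variables: a frequency translation by $k_0$ can be absorbed by conjugating $e^{it\Delta_\beta}$ with an $L^q_{t,x}$-isometry combining modulation, a spatial translation linear in $t$, and a time-dependent phase.

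Part (5) I would prove by interpolating the same $L^{10/3}$ decoupling estimate---which applies uniformly to $P_R$ for any $R$ contained in a cube of side $N$, yielding $\|P_R u\|_{L^{10/3}_{t,x}}\lesssim N^{0+}\|P_R u\|_{X^{0,b}}$---against the sharpened Bernstein bound $\|P_R u\|_{L^\infty_{t,x}}\lesssim |R|^{1/2}\|P_R u\|_{X^{0,b}} = NM^{1/2}\|P_R u\|_{X^{0,b}}$ that exploits the smaller volume of $R$. Interpolation at $q\in[10/3,\infty]$ produces $N^{1-10/(3q)+}M^{1/2-5/(3q)} = N^{3/2-5/q+}(M/N)^{1/2-5/(3q)}$, matching the claim exactly.

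Part (6) is the main technical input: the long-time Strichartz estimate of \cite{DGG}, which I would simply cite. Its proof rests on a Diophantine analysis of the phase $Q(k)=\sum\beta_i k_i^2$, showing that for generic $\beta$ the level sets of $Q|_{[-N,N]^3}$ have low multiplicity, which controls the counting function for near-resonances and allows the unit-time decoupling to be extended over intervals of length $N^{\nu(q)}$ with only an $N^{0+}$ loss. The form of $\nu(q)$ arises from interpolating the maximal gain at the $L^6$ endpoint (where $\nu(6)=4$) with the critical $L^{10/3}$ threshold (where no gain is possible). This Diophantine and combinatorial counting is both the principal obstacle and the reason for restricting to generic $\beta$.
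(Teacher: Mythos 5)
Your proposal follows essentially the paper's route: parts (1)--(3) are cited from the standard $X^{s,b}$ calculus, part (4) is the Bourgain--Demeter Strichartz estimate (with Galilean invariance for cubes $B$), part (5) interpolates the $q=10/3$ case against a Bernstein/Hausdorff--Young $L^\infty_{t,x}$ bound that picks up the volume factor $|R|^{1/2}=NM^{1/2}$, and part (6) is cited from \cite{DGG}. The only cosmetic difference is that for part (4) you reconstruct the range $q\geq 10/3$ by interpolating $L^{10/3}$ with $L^\infty$ rather than quoting the full Strichartz range directly from \cite{BD}; the content and all exponents are identical.
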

\begin{proof} Parts (1)$\sim$(3) are well-known, see~\cite{Tao}; part (4) follows from the full Strichartz estimate of Bourgain-Demeter~\cite{BD}. The corresponding result for $P_Bu$ follows from Galilean invariance. Part (6) is proved in ~\cite{DGG}. Finally, part (5) follows from part (4) in the case $q=10/3$, from Hausdorff-Young and H\"{o}lder in the case $q=\infty$, and from interpolation for any $q$ in between.
\end{proof}

\begin{rem}\label{int} By interpolating (4) with the trivial bounds\[\|P_Nu\|_{L_{t,x}^2}=\|P_Nu\|_{X^{0,0}},\qquad \|P_{N}u\|_{L_{t,x}^{\infty}}\lesssim N^{\frac{3}{2}}\|P_Nu\|_{X^{0,1/2+}}\] and by duality, one gets a number of Strichartz estimates that will be used below; for example\[\|P_{N}u\|_{L_{t,x}^{10/3-}([0,1]\times\mathbb{T}^3)}\lesssim N^{o(1)}\|P_Nu\|_{X^{0,1/2-}}\] follows from interpolating the corresponding $X^{0,1/2+}$ and $X^{0,0}$ estimates. Moreover, by summing over $N$ one also gets estimates such as 
\begin{align*}
& \|u\|_{L_{t,x}^{10-}([0,1]\times\mathbb{T}^3)}\lesssim\|u\|_{X^{1,1/2+}}.
\end{align*}
\end{rem}

\subsection{A nonlinear lemma} For nonlinear terms of the type $F(u)$, with $F$ a function of type $r$, one cannot employ the paraproduct decomposition to obtain estimates on dyadic frequency blocks. The following lemma circumvents this difficulty.
 
\begin{prop}[A nonlinear lemma]\label{nonlem} Let $F$ be any function of type $r$, where $r\geq 1$, then for any dyadic $K$ we have \begin{equation}\label{nonlemest}\|P_KF(u)\|_{L_{t,x}^{q_0}}\lesssim K^{o(1)}\|u\|_{L_{t,x}^{q_1}}^{r-1}\cdot\sum_{M}\min(1,K^{-1}M)\|P_Mu\|_{L_{t,x}^{q_2}},\end{equation} provided that \[q_0,q_1,q_2\in [1,\infty],\quad \frac{1}{q_0}=\frac{r-1}{q_1}+\frac{1}{q_2}.\]
\end{prop}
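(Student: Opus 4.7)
The plan is to split $u$ at the frequency scale $K$ and handle the resulting pieces separately --- this serves as the substitute for a paraproduct decomposition, which is unavailable since $F$ is not multilinear. Fix a small dyadic constant $C$ (say $C=4$), set $v := P_{<K/C}u$ and $w := P_{\geq K/C}u$, and write $F(u) = F(v) + \bigl[F(u) - F(v)\bigr]$.

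For the high-frequency piece, I would apply the fundamental theorem of calculus, treating $F$ as a function of the independent complex variables $(z,\bar z)$:
\[F(u) - F(v) = \int_0^1 \bigl[(\partial_z F)(v+sw)\, w + (\partial_{\bar z} F)(v+sw)\, \bar w\bigr]\, ds.\]
Since $\partial_z F$ and $\partial_{\bar z} F$ are of type $r-1$ and hence pointwise $\lesssim |u|^{r-1}$, H\"older in space-time with $1/q_0 = (r-1)/q_1 + 1/q_2$, combined with the triangle inequality over the Littlewood--Paley pieces of $w$, yields
\[\bigl\|P_K[F(u)-F(v)]\bigr\|_{L^{q_0}_{t,x}} \lesssim \|u\|_{L^{q_1}_{t,x}}^{r-1}\sum_{M\geq K/C}\|P_M u\|_{L^{q_2}_{t,x}},\]
which is consistent with the stated bound because $\min(1, K^{-1}M)\gtrsim 1$ in this range.

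The main step is the low-frequency piece $P_K F(v)$, where the $M/K$ gain must be produced. Here I would exploit that $v$ is band-limited to $|k|<K/C$: Bernstein's inequality gives $\|P_K F(v)\|_{L^{q_0}_{t,x}} \lesssim K^{-1}\|\nabla F(v)\|_{L^{q_0}_{t,x}}$, and the chain rule then yields $\nabla F(v) = F'(v)\,\nabla v$ (shorthand for the two Wirtinger contributions). H\"older with $|F'(v)|\lesssim|v|^{r-1}$, together with a termwise Bernstein bound $\|\nabla P_M u\|_{L^{q_2}_{t,x}}\lesssim M\|P_M u\|_{L^{q_2}_{t,x}}$, then produces
\[\|P_K F(v)\|_{L^{q_0}_{t,x}} \lesssim \|u\|_{L^{q_1}_{t,x}}^{r-1}\sum_{M<K/C}\frac{M}{K}\,\|P_M u\|_{L^{q_2}_{t,x}},\]
which is exactly the required factor $\min(1, K^{-1}M) = M/K$ in this regime.

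Summing the two contributions yields the claimed estimate; in fact the argument produces the bound without requiring the $K^{o(1)}$ slack in the statement, which thus appears to be a safety margin. The one delicate technical point is the chain rule at the endpoint $r=1$, where $F$ may be only Lipschitz (e.g.\ $F(z)=|z|$) rather than $C^1$ at the origin; this is handled via Rademacher/Stampacchia, using the smoothness of the band-limited $v$. All Bernstein and H\"older inputs used remain valid uniformly for $q_i\in[1,\infty]$ thanks to the uniform $L^1$-bounds on the convolution kernels of the Littlewood--Paley projections and their gradients.
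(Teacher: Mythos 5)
Your proof is correct, and it uses the same two ingredients as the paper's: a fundamental-theorem-of-calculus expansion to pull out a high-frequency factor, plus Bernstein in the form $\|P_K g\|\lesssim K^{-1}\|\nabla g\|$ to produce the $M/K$ gain from low frequencies. The organizational difference is that the paper expands $P_KF(u)$ as a telescoping sum $\sum_M P_K[F(P_{\leq M}u)-F(P_{\leq M/2}u)]$ and runs the dichotomy $M\gtrless K$ term by term, while you make a single high/low split of $u$ at scale $K$. Your version is slightly cleaner: the paper's low-frequency terms lead to a double sum $\sum_{M\leq K}\sum_{M'\leq M}M'\|P_{M'}u\|_{L^{q_2}}$ whose summation swap costs a factor $\log(K/M')$, which is what forces the $K^{o(1)}$ in the statement; your single split avoids this, confirming your observation that the $K^{o(1)}$ is only a safety margin. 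One small imprecision to fix: in the high-frequency piece, $\partial_zF$ and $\partial_{\bar z}F$ are evaluated at $v+sw$, not at $u$, so the pointwise bound is $\lesssim|v+sw|^{r-1}$; you then need the uniform-in-$s$ bound $\|v+sw\|_{L^{q_1}}\lesssim\|u\|_{L^{q_1}}$, which holds because $v+sw=su+(1-s)P_{<K/C}u$ and $P_{<K/C}$ is convolution with an $L^1$-normalized kernel (so it is bounded on all $L^p$, $1\le p\le\infty$). The chain-rule subtlety at $r=1$ that you flag is present in the paper's argument as well, in the step $|\nabla[F(P_{\leq M}u)-F(P_{\leq M/2}u)]|\lesssim|\nabla P_{\leq M}u|\,|P_{\leq M}u|^{r-1}+\dots$, so you are on equal footing there.
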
 
\begin{proof} We decompose\[P_KF(u)=\sum_{M}P_K\left[F(P_{\leq M}u)-F(P_{\leq M/2}u)\right]:=\sum_M P_K\mathcal{I}_M,\] and notice that \[\mathcal{I}_M=P_{M}u\cdot \int_0^1(\partial_zF)(P_{\leq M/2}u+\theta P_{M}u)\,\mathrm{d}\theta+\overline{P_{M }u}\cdot \int_0^1(\partial_{\overline{z}}F)(P_{\leq M/2}u+\theta P_{M}u)\,\mathrm{d}\theta.\] Therefore, when $M\geq K$ we can estimate\[\|P_K\mathcal{I}_M\|_{L_{t,x}^{q_0}}\lesssim\|P_{M}u\|_{L_{t,x}^{q_2}}\left(\|P_{\leq M/2}u\|_{L_{t,x}^{q_1}}+\|P_{M}u\|_{L_{t,x}^{q_1}}\right)^{r-1}\lesssim \|P_{M}u\|_{L_{t,x}^{q_2}}\|u\|_{L_{t,x}^{q_1}}^{r-1},\] while for $M\leq K$ we have\[\begin{split}\|P_K\mathcal{I}_M\|_{L_{t,x}^{q_0}}&\lesssim K^{-1}\left\|\nabla \left[F(P_{\leq M}u)-F(P_{\leq M/2}u)\right]\right\|_{L_{t,x}^{q_0}}\\
&\lesssim K^{-1}\left(\left\||\nabla P_{\leq M}u|\cdot|P_{\leq M}u|^{r-1}\right\|_{L_{t,x}^{q_0}}+\left\||\nabla P_{\leq M/2}u|\cdot|P_{\leq M/2}u|^{r-1}\right\|_{L_{t,x}^{q_0}}\right)\\
&\lesssim K^{-1}\|\nabla P_{\leq M}u\|_{L_{t,x}^{q_2}}\|P_{\leq M}u\|_{L_{t,x}^{q_1}}^{r-1}\\
&\lesssim K^{-1}\|u\|_{L_{t,x}^{q_1}}^{r-1}\sum_{M'\leq M}M'\|P_{M'}u\|_{L_{t,x}^{q_2}}.\end{split}\] Summing over $M$, this implies (\ref{nonlemest}).
\end{proof}
\begin{prop}\label{propertyd} Recall the multiplier $\mathcal{D}$ defined in (\ref{mult}). We have \begin{equation}\label{product}\|\mathcal{D}(fg)\|_{L_{t,x}^{q_0}}\lesssim \|\mathcal{D}f\|_{L_{t,x}^{q_1}}\|\mathcal{D}g\|_{L_{t,x}^{q_2}},\end{equation} provided \[q_0,q_1,q_2\in[1,\infty],\quad \frac{1}{q_0}=\frac{1}{q_1}+\frac{1}{q_2},\] and \begin{equation}\label{nonlinear}\|\mathcal{D}F(u)\|_{L_{t,x}^q}\lesssim\|\mathcal{D}u\|_{L_{t,x}^{qr}}^r\end{equation} provided that $F$ is of type $r$, where $\min(q,r)\geq 1$. Moreover, we have \begin{equation}\label{nonlemd}\|P_K\mathcal{D}F(u)\|_{L_{t,x}^{q_0}}\lesssim K^{o(1)}\|\mathcal{D}u\|_{L_{t,x}^{q_1}}^{r-1}\cdot\sum_{M}\min(1,K^{-1}M)\|P_M\mathcal{D}u\|_{L_{t,x}^{q_2}},\end{equation} provided that $F$ is of type $r\geq 2$, and \[q_0,q_1,q_2\in [1,\infty],\quad \frac{1}{q_0}=\frac{r-1}{q_1}+\frac{1}{q_2}.\]
\end{prop}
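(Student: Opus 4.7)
The plan is to reduce everything to classical Leibniz/chain rule estimates for $\nabla$ by first establishing three elementary multiplier facts about $\mathcal{D}$. (A) Since $1/m(k)=1/\theta(k/N)$ is smooth, bounded, and has scale-invariant derivative estimates in $k/N$, it defines a Mihlin--H\"ormander multiplier uniformly in $N$; hence $\mathcal{D}^{-1}$ is bounded on every $L^p$ with $1<p<\infty$, giving $\|f\|_{L^p}\lesssim\|\mathcal{D}f\|_{L^p}$. (B) The symbol $(ik)/m(k)$ has magnitude $\lesssim N$ with scale-invariant derivative bounds (it equals $ik$ for $|k|\leq N$ and is $\sim iN\hat k$ for $|k|\geq 2N$), so $\|\nabla f\|_{L^p}\lesssim N\|\mathcal{D}f\|_{L^p}$. (C) Writing $\mathcal{D}=1+R$, the symbol of $R$ factors as $\theta(k/N)-1=(|k|/N)\,\eta(k/N)$, where $\eta(y):=(\theta(y)-1)/|y|$ extends smoothly to $\mathbb{R}^3$ (since $\theta-1$ vanishes to all orders at $|y|=1$) and satisfies Mihlin's condition uniformly in $N$. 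Thus $R=N^{-1}\eta(D/N)\circ|\nabla|$, yielding $\|Rv\|_{L^q}\lesssim N^{-1}\|\nabla v\|_{L^q}$.

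Granting (A)--(C), the product estimate (\ref{product}) follows from the decomposition $\mathcal{D}(fg)=fg+R(fg)$. H\"older plus (A) handles the first term. For the second, (C) together with the ordinary Leibniz rule and then (A), (B) yields
\[\|R(fg)\|_{L^{q_0}}\lesssim N^{-1}\bigl(\|\nabla f\|_{L^{q_1}}\|g\|_{L^{q_2}}+\|f\|_{L^{q_1}}\|\nabla g\|_{L^{q_2}}\bigr)\lesssim\|\mathcal{D}f\|_{L^{q_1}}\|\mathcal{D}g\|_{L^{q_2}}.\]
Estimate (\ref{nonlinear}) is entirely analogous: $\|F(u)\|_{L^q}\lesssim\|u\|_{L^{qr}}^r\lesssim\|\mathcal{D}u\|_{L^{qr}}^r$, and the pointwise chain rule $|\nabla F(u)|\lesssim|u|^{r-1}|\nabla u|$ (valid since $F$ of type $r\geq 1$ gives $|F'|\lesssim|z|^{r-1}$ without singularity at $z=0$) combined with (C), H\"older, (A), (B) produces $\|RF(u)\|_{L^q}\lesssim\|\mathcal{D}u\|_{L^{qr}}^r$.

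For the Littlewood--Paley estimate (\ref{nonlemd}), the plan is to imitate the proof of Proposition \ref{nonlem} with $\mathcal{D}$ inserted throughout. Decompose $F(u)=\sum_M\mathcal{I}_M$ as there, and write
\[\mathcal{I}_M=P_Mu\cdot\int_0^1(\partial_z F)(P_{\leq M/2}u+\theta P_Mu)\,\mathrm{d}\theta+\overline{P_Mu}\cdot\int_0^1(\partial_{\overline z}F)(P_{\leq M/2}u+\theta P_Mu)\,\mathrm{d}\theta,\]
in which the $\theta$-integrals define functions of type $r-1$. For $M\geq K$, apply (\ref{product}) with exponents $(q_2,q_1/(r-1))$ followed by (\ref{nonlinear}) on the type-$(r-1)$ factor to obtain $\|P_K\mathcal{D}\mathcal{I}_M\|_{L^{q_0}}\lesssim\|P_M\mathcal{D}u\|_{L^{q_2}}\|\mathcal{D}u\|_{L^{q_1}}^{r-1}$. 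For $M\leq K$, use the Bernstein-type bound $\|P_Kv\|_{L^{q_0}}\lesssim K^{-1}\|\nabla v\|_{L^{q_0}}$ applied to $v=\mathcal{D}\mathcal{I}_M$, then expand $\nabla\mathcal{I}_M=F'(P_{\leq M}u)\nabla P_{\leq M}u-F'(P_{\leq M/2}u)\nabla P_{\leq M/2}u$ and bound $\mathcal{D}$ of each product via (\ref{product}) and (\ref{nonlinear}); since $\mathcal{D}$ commutes with $\nabla$ and $P_{\leq M}$, this yields $K^{-1}\|\mathcal{D}u\|_{L^{q_1}}^{r-1}\sum_{M'\leq M}M'\|P_{M'}\mathcal{D}u\|_{L^{q_2}}$. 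Summing in $M$ and exchanging the order of summation produces the logarithmic factor $K^{o(1)}$ together with the weight $\min(1,K^{-1}M)$, giving (\ref{nonlemd}). The hypothesis $r\geq 2$ enters precisely where (\ref{nonlinear}) is applied to $F'$ of type $r-1$, requiring $r-1\geq 1$; this, together with the bookkeeping of H\"older exponents through the iterations of (\ref{product}) and (\ref{nonlinear}), is the only real obstacle.
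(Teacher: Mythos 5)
Your proposal is correct and follows essentially the same route as the paper: the paper's proof rests on the norm equivalence $\|\mathcal{D}u\|_{L^p}\sim\|u\|_{L^p}+N^{-1}\|\nabla u\|_{L^p}$ (stated there without proof), which is exactly what your facts (A)--(C) establish via uniform Mihlin bounds on $1/m$, $(ik)/m(k)$, and $\eta=(\theta-1)/|\cdot|$, and the treatment of (\ref{nonlemd}) by repeating the telescoping decomposition of Proposition~\ref{nonlem} with $\mathcal{D}$ inserted is also the paper's argument. One minor caveat worth noting (and which the paper itself glosses over) is that the Mihlin route only gives the $L^p$ bounds for $1<p<\infty$, so the endpoints $q_i\in\{1,\infty\}$ in the stated hypotheses require a separate kernel check; this does not affect how the proposition is used later.
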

\begin{proof} Note that \[\|\mathcal{D}u\|_{L_{t,x}^{p}}\sim\|u\|_{L_{t,x}^p}+N^{-1}\|\nabla u\|_{L_{t,x}^p},\] from which (\ref{product}) and (\ref{nonlinear}) follow easily. As for (\ref{nonlemd}), we repeat the proof of Proposition \ref{nonlem}, and write\[P_K\mathcal{D}F(u)=\sum_{M}P_K\mathcal{DI}_M,\] where \[\mathcal{I}_M=F(P_{\leq M}u)-F(P_{\leq M/2}u).\] By the same arguments in the proof of Proposition \ref{nonlem}, together with (\ref{product}) and (\ref{nonlinear}), and using the fact that $\nabla F$ is or type $r-1\geq 1$, one gets that\[\|P_K\mathcal{DI}_M\|_{L_{t,x}^{q_0}}\lesssim \|\mathcal{D}u\|_{L_{t,x}^{q_1}}^{r-1}\|P_{M}\mathcal{D}u\|_{L_{t,x}^{q_2}}\] if $M\geq K$, and that\[\|P_K\mathcal{DI}_M\|_{L_{t,x}^{q_0}}\lesssim K^{-1}\|\mathcal{D}u\|_{L_{t,x}^{q_1}}^{r-1}\sum_{M'\leq M}M'\|P_{M'}\mathcal{D}u\|_{L_{t,x}^{q_2}}\] when $M\leq K$. Summing over $M$ gives (\ref{nonlemd}).
\end{proof}
\section{Local theory}
\begin{prop} Fix $b=1/2+$.
\begin{itemize}
\item[(1)] (Local well-posedness) \label{locale} Suppose $\|f\|_{H^1}\leq E$, then for a short time $\varepsilon=\varepsilon(E)\ll 1$, the equation (\ref{nls}) has a unique solution $u\in X^{1,b,[-\varepsilon,\varepsilon]}$ with initial data $u(0)=f$, and one has \[\|u\|_{X^{1,b,[-\varepsilon,\varepsilon]}}\lesssim_E1.\] 
\item[(2)] (Propagation of regularity) Moreover, if in addition $\|f\|_{H^2}\leq A$, then we also have \[\|u\|_{X^{2,b,[-\varepsilon,\varepsilon]}}\lesssim_EA.\]
\end{itemize}
\end{prop}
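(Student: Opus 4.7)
The plan is a standard Picard iteration in the $X^{s,b}$ spaces with a time cutoff, plus a repetition one derivative higher for propagation of regularity. Fix $b=1/2+$. For part (1), introduce the Duhamel map
\[
\Phi(u)(t) := \chi(t)\,e^{it\Delta_\beta}f \;-\; i\,\chi(t)\int_0^t e^{i(t-t')\Delta_\beta}\!\Bigl[\chi(\varepsilon^{-1}t')\,|u(t')|^{p-1}u(t')\Bigr]\,\mathrm{d}t'.
\]
Parts (1) and (3) of Proposition \ref{linear} immediately yield
\[
\|\Phi(u)\|_{X^{1,b}}\lesssim \|f\|_{H^1}+\bigl\|\chi(\varepsilon^{-1}t)\,|u|^{p-1}u\bigr\|_{X^{1,b-1}},
\]
so the whole argument reduces to the nonlinear estimate
\[
\bigl\|\chi(\varepsilon^{-1}t)\,|u|^{p-1}u\bigr\|_{X^{1,b-1}}\lesssim \varepsilon^{\sigma}\,\|u\|_{X^{1,b}}^{p}
\]
for some $\sigma>0$, together with its Lipschitz analogue in $u$. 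Given this, choosing $\varepsilon=\varepsilon(E)$ small makes $\Phi$ a contraction on the ball $\{\|u\|_{X^{1,b}}\le C\|f\|_{H^1}\}$, and uniqueness follows in the standard way.

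To set up the nonlinear estimate I would first apply (\ref{lin2}) with $b-1<-1/2+2\delta<1/2$ to extract a factor $\varepsilon^{\delta}$ and reduce matters to the fixed time-interval estimate $\||u|^{p-1}u\|_{X^{1,-1/2+2\delta}}\lesssim \|u\|_{X^{1,1/2+\delta}}^p$ on $[-1,1]\times\mathbb{T}^3$. By duality this is equivalent to
\[
\Bigl|\int |u|^{p-1}u\;\overline{v}\,\mathrm{d}x\,\mathrm{d}t\Bigr|\;\lesssim\; \|u\|_{X^{1,1/2+\delta}}^{p}\,\|v\|_{X^{-1,1/2-2\delta}}.
\]
Because $3<p<5$ is not an integer, $F(u)=|u|^{p-1}u$ is merely of type $p$, so a naive paraproduct is unavailable; this is where I would invoke the nonlinear lemma (Proposition \ref{nonlem}) applied to $\nabla F$, which is of type $p-1\ge 2$. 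Summing over dyadic pieces and estimating each factor by the Strichartz family in Remark \ref{int} (one factor controlled in $L^{10/3-}_{t,x}$ by $X^{1,1/2+}$, the rest placed in $L^{10-}_{t,x}$ by $X^{1,1/2+}$, and $v$ placed in the appropriate dual $L^{q}_{t,x}$ via $X^{-1,1/2-}$) one reaches the desired bound with a little room to spare, which is absorbed into the positive power $\varepsilon^{\sigma}$.

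For part (2), I would repeat the same fixed-point scheme in $X^{2,b,[-\varepsilon,\varepsilon]}$, on the time interval already produced by (1), on which $\|u\|_{X^{1,b}}\lesssim_E 1$. The only change is that the derivative now has to be placed on a single factor of the nonlinearity, so the analogue of the key estimate becomes
\[
\bigl\|\chi(\varepsilon^{-1}t)\,|u|^{p-1}u\bigr\|_{X^{2,b-1}}\lesssim \varepsilon^{\sigma}\,\|u\|_{X^{2,b}}\,\|u\|_{X^{1,b}}^{p-1},
\]
which is proved by exactly the same dyadic decomposition and Strichartz bookkeeping as above, with the derivative tracked on the top-frequency factor via Proposition \ref{propertyd}. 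The bound $\|u\|_{X^{2,b}}\lesssim_E A$ follows by closing the resulting linear-in-$\|u\|_{X^{2,b}}$ inequality once $\varepsilon$ is small enough.

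The main obstacle is the nonlinear $X^{s,b}$ estimate for the non-smooth nonlinearity $|u|^{p-1}u$ with non-integer $p\in(3,5)$. A standard Littlewood-Paley paraproduct cannot handle $F$ directly, so one must combine the nonlinear lemma (Proposition \ref{nonlem}) with the full Strichartz/interpolation package of Proposition \ref{linear} and Remark \ref{int}, balancing the exponents so as to leave a strictly positive gain $\varepsilon^{\sigma}$ to close the contraction; once that is set up, the rest is routine contraction and bootstrap.
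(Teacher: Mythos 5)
Your overall architecture matches the paper's: Duhamel map with the time cutoff $\chi(\varepsilon^{-1}t')$, Proposition~\ref{linear}(1)--(3) to reduce to a gain-$\varepsilon^\sigma$ estimate for $\nabla(|u|^{p-1}u)$ in $X^{0,b'-1}$, Proposition~\ref{nonlem} to handle the non-polynomial nonlinearity, and a repeat with one more derivative for part~(2). The place where your sketch has a genuine gap is the dyadic summation, which is precisely the hard step the paper spends most of its effort on.

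Concretely, decompose $J=F_{p-1}(u)\nabla\overline{u}=\sum_{N_2,N_3}P_{N_2}F_{p-1}(u)\cdot P_{N_3}\nabla\overline{u}$. Your plan of ``dual Strichartz for $v$, $L^{10/3}$ for $\nabla u$, the rest in $L^{10-}$'' gives, after Proposition~\ref{nonlem}, a single-block bound of the form $N^{o(1)}N_2^{(p-5)/2+}E^{p-1}\|P_{N_3}u\|_{X^{1,b}}$, where $N=\max(N_2,N_3)$ and the $N^{o(1)}$ comes from the inevitable $\epsilon$-loss in the periodic Strichartz/dual-Strichartz estimate applied at the output frequency. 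In the low-high regime $N_2\ll N_3$, the output frequency is $\sim N_3$, so after summing $N_2$ you are left with $\sum_{N_3}N_3^{o(1)}\|P_{N_3}u\|_{X^{1,b}}^2$, which is controlled by $\|u\|_{X^{1+o(1),b}}^2$ but \emph{not} by $\|u\|_{X^{1,b}}^2$. This is a real loss of a fraction of a derivative; it does not self-correct by ``a little room to spare'' and the contraction in $X^{1,b}$ does not close.

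The paper's fix, which your proposal does not mention, is to further split $P_{N_3}u=\sum_{B\in\mathcal{Q}_{N_2,N_3}}P_Bu$ into cubes $B$ of side length $N_2$ when $N_2\le N^{1/2}$. The product $P_{N_2}F_{p-1}(u)\cdot P_B\nabla\overline{u}$ then has Fourier support in $10B$, a cube of size $\sim N_2$, and the Galilean-invariant version of the Strichartz estimate (the ``moreover'' clause in Proposition~\ref{linear}(4)) applies at scale $N_2$. Consequently the dual-Strichartz loss is only $N_2^{o(1)}$, which is absorbed into $N_2^{(p-5)/2+}$, and almost-orthogonality of the family $\{10B\}$ lets one take an $\ell^2$ sum in $(N_3,B)$ that reproduces exactly $\|u\|_{X^{1,b}}$ with no extra power. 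The complementary regime $N_2\ge N^{1/2}$ is handled directly since $N_2^{(p-5)/2+}\lesssim N^{(p-5)/4+}$ already decays in $N$. Without this cube decomposition and orthogonality step your estimate has an $\epsilon$-derivative deficit; everything else in your outline (including the Lipschitz estimate for the contraction, which requires the analogous treatment of $\overline{u-v}\cdot F_{p-2}(w)$, and part~(2), where a separate case analysis for the $F_{p-2}(u)(\nabla u)(\nabla\overline{u})$ terms is needed) is on the right track but hinges on the same device.
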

\begin{proof} (1) Fix $b'=b+$. For a suitable cutoff function $\chi$, we only need to prove that the mapping \[u\mapsto\mathcal{N}u:=\chi(t)e^{it\Delta_\beta}f-i\chi(t)\int_0^te^{i(t-t')\Delta_\beta}\left(\chi(\varepsilon^{-1}t')|u(t')|^{p-1}u(t')\right)\,\mathrm{d}t'\] is a contraction mapping from the set\[K:=\left\{u\in X^{1,b}:\|u-\chi(t)e^{it\Delta_\beta}f\|_{X^{1,b}}\leq 1\right\}\] to itself, since this would imply that the Duhamel map $\mathcal{N}$  is also a contraction mapping from the $1$-neighborhood of $e^{it\Delta_\beta}f$ in $X^{1,b,[-\varepsilon,\varepsilon]}$ to itself.

Now suppose $u\in K$, then \[\|u\|_{X^{1,b}}\lesssim E\] by (\ref{lin1}), and by (\ref{lin1})$\sim$(\ref{lin3}) together with the definition of $\mathcal{N}$, we get that
\begin{equation}\label{boot2}\left\|\mathcal{N}u-\chi(t)e^{it\Delta_\beta}f\right\|_{X^{1,b}}\lesssim \varepsilon^{b'-b}\left\|\chi(t)|u|^{p-1}u\right\|_{X^{1,b'-1}}.\end{equation} Thus, if we can prove that
\begin{equation}\label{nonest}\left\|\chi(t)\nabla(|u|^{p-1}u)\right\|_{X^{0,b'-1}}\lesssim E^p,\end{equation} then (\ref{boot2}) would imply that \[\left\|\mathcal{N}u-\chi(t)e^{it\Delta_\beta}f\right\|_{X^{1,b}}\lesssim\varepsilon^{b'-b}E^{p},\] which gives that $\mathcal{N}u\in K$ when $\varepsilon$ is small enough (strictly speaking one has to bound the $X^{0,b'-1}$ norm of $\chi(t)|u|^{p-1}u$ also, but that easily follows from the proof below). Now let us prove (\ref{nonest}). Note that\[\nabla(|u|^{p-1}u)=F_{p-1}(u)\nabla u+F_{p-1}(u)\nabla\overline{u},\] where $F_{p-1}(u)$ denotes a function of type $p-1$; we will only prove the bound for $J:=F_{p-1}(u)\nabla \overline{u}$, since the other term is similar.

Write $I:=F_{p-1}(u)$. Let $N_2$ and $N_3$ be dyadic scales and $\max (N_2,N_3)=N$, we decompose\[J=\sum_{N_2,N_3}P_{N_2}I\cdot P_{N_3}\nabla\overline{u}.\] By Proposition \ref{nonlem} and Strichartz we know that\begin{multline}\label{boundn2}\|P_{N_2}I\|_{L_{t,x}^{(5/2)+}}\lesssim N_2^{o(1)}\|u\|_{L_{t,x}^{10-}}^{p-2}\sum_{M}\min(1,N_2^{-1}M)\|P_Mu\|_{L_{t,x}^{[10/(6-p)]+}}\\\lesssim N_2^{o(1)}E^{p-2}\sum_{M}\min(1,N_2^{-1}M)M^{\frac{p-5}{2}+}E\lesssim N_2^{\frac{p-5}{2}+}E^{p-1}.\end{multline} Thus if $N_2\geq N^{1/2}$, by Strichartz and dual Strichartz we have
\begin{multline}\|\chi(t)P_{N_2}I\cdot P_{N_3}\nabla\overline{u}\|_{X^{0,b'-1}}\lesssim N^{o(1)}\|P_{N_2}I\cdot P_{N_3}\nabla\overline{u}\|_{L_{t,x}^{(10/7)+}}\\\lesssim  N^{o(1)} \|P_{N_2}I\|_{L_{t,x}^{(5/2)+}}\|\nabla P_{N_3}u\|_{L_{t,x}^{10/3}}\lesssim N^{\frac{p-5}{4}+} E^{p},\end{multline}
which gives an acceptable contribution to obtain (\ref{nonest}) since $p-5<0$. Now assume $N_2\leq N^{1/2}$, then $N=N_3$. For fixed $N_3$, we decompose\[P_{N_3}u=\sum_{B\in \mathcal{Q}_{N_2,N_3}}P_{B}u,\] where $\mathcal{Q}_{N_2,N_3}$ is a partition of $\{k\in\mathbb{R}^3:|k|\sim N_3\}$ by cubes of size $N_2$. Therefore we have \[J=\sum_{N_2}\sum_{N_3}\sum_{B\in \mathcal{Q}_{N_2,N_3}}P_{N_2}I\cdot  P_B\nabla\overline{u}.\]  Now if $B\in \mathcal{Q}_{N_2,N_3}$ and $B'\in\mathcal{Q}_{N_2,N_3'}$, and either $N_3\gg N_3'$ or $N_3'\gg N_3$ or $\mathrm{dist}(B,B')\gg N_3$, it is easily seen that the terms $P_{N_2}I\cdot P_B\nabla\overline{u}$ and $P_{N_2}I\cdot P_{B'} \nabla \overline{u}$ must have disjoint Fourier support, and are thus orthogonal. Therefore we have\[\|\chi(t)J\|_{X^{0,b'-1}}\lesssim\sum_{N_2}\bigg(\sum_{N_3}\sum_{B\in \mathcal{Q}_{N_2,N_3}}\|\chi(t)P_{N_2}I\cdot P_B\nabla\overline{u}\|_{X^{0,b'-1}}^2\bigg)^{1/2}.\]
 Moreover for $B \in \mathcal{Q}_{N_2,N_3}$ we actually have \[P_{N_2}I\cdot P_{B}\nabla\overline{u}=P_{10B}(P_{N_2}I\cdot P_{B}\nabla\overline{u}),\]thus by (\ref{boundn2}), Strichartz, dual Strichartz and H\"{o}lder we have
\begin{multline}\|\chi(t)P_{N_2}I\cdot P_{B}\nabla\overline{u}\|_{X^{0,b'-1}}\lesssim N_2^{o(1)} \|P_{10B}(P_{N_2}I\cdot P_{B}\nabla\overline{u})\|_{L_{t,x}^{(10/7)+}}\\\lesssim N_2^{o(1)} \|P_{N_2}I\|_{L_{t,x}^{(5/2)+}}\|\nabla P_{B}u\|_{L_{t,x}^{10/3}}\lesssim N_2^{\frac{p-5}{2}+}E^{p-1}\|P_{B}u\|_{X^{1,b}},\end{multline}
  which gives \[\sum_{N_3}\sum_{B\in\mathcal{Q}_{N_2,N_3}}\|\chi(t)P_{N_2}I\cdot P_B\nabla\overline{u}\|_{X^{0,b'-1}}^2\lesssim N_2^{(p-5)+}E^{2(p-1)}\|u\|_{X^{1,b}}^2.\] Taking square root and summing in $N_2$, we get that \[\|\chi(t)J\|_{X^{0,b'-1}}\lesssim E^{p-1}\|u\|_{X^{1,b}}\lesssim  E^{p},\] and this proves (\ref{nonest}).

\bigskip

To show that $\mathcal{N}$ is a contraction mapping, it suffices to show that \[\left\|\chi(t)\nabla(|u|^{p-1}u-|v|^{p-1}v)\right\|_{X^{0,b'-1}}\lesssim E^{p-1}\|u-v\|_{X^{1,b}}\] provided $\|u\|_{X^{1,b}}+\|v\|_{X^{1,b}}\lesssim E$. This can be done by writing\[\nabla(|u|^{p-1}u-|v|^{p-1}v)=\nabla\bigg((u-v)\cdot\int_0^1F_{p-1}(v+\theta(u-v))\,\mathrm{d}\theta\\+\overline{u-v}\cdot\int_0^1F_{p-1}(v+\theta(u-v))\,\mathrm{d}\theta\bigg).
\] Fix $\theta$ and let $v+\theta(u-v)=w$, we only need to estimate the terms\[F_{p-1}(w)\nabla(u-v),\quad F_{p-1}(w)\nabla(\overline{u-v}),\quad (u-v)F_{p-2}(w)\nabla w,\quad \overline{u-v}\cdot F_{p-2}(w)\nabla w.\] The first two terms can be estimated in exactly the same way as above, and the next two terms can be estimated in the same way as the last one. Now let us prove the estimate for $\overline{u-v}\cdot F_{p-2}(w)\nabla w$. Let $I'=\overline{u-v}\cdot F_{p-2}(w)$, then if can prove \begin{equation}\label{new0}\|P_{N_2}I'\|_{L_{t,x}^{(5/2)+}}\lesssim N_2^{\frac{p-5}{2}+}E^{p-2}\|u-v\|_{X^{1,b}},\end{equation} the same argument as above will apply to give the desired estimate. To prove (\ref{new0}), we perform a similar (and simpler) argument as in the proof of Proposition \ref{nonlem}. Write $I'=I''+R'$, where $I''=P_{\leq N_2}(\overline{u-v}) \cdot F_{p-2}(P_{\leq N_2}w)$, thus\[|\nabla I''|\lesssim|\nabla P_{\leq N_2}(u-v)|\cdot|P_{\leq N_2}w|^{p-2}+|P_{\leq N_2}(u-v)|\cdot|P_{\leq N_2}w|^{p-3}\cdot|\nabla P_{\leq N_2}w|,\] and \[ |R''|\lesssim |P_{>N_2}(u-v)|\cdot|P_{\leq N_2}w|^{p-2}+|P_{>N_2}w|\cdot(|w|+|P_{\leq N_2}w|)^{p-3}|u-v|,\] (which follows from the inequality $|F_{p-2}(x)-F_{p-2}(y)|\lesssim|x-y|\cdot(|x|^{p-3}+|y|^{p-3})$ since $p>3$). Then we can bound \[\begin{split}\|P_{N_2}I''\|_{L_{t,x}^{(5/2)+}}\lesssim N_2^{-1}\|\nabla P_{N_2}I''\|_{L_{t,x}^{(5/2)+}}&\lesssim N_2^{-1}\bigg(\|\nabla P_{\leq N_2}(u-v)\|_{L_{t,x}^{10/3}}\|P_{\leq N_2}w\|_{L_{t,x}^{10(p-2)}}^{p-2}\\
&+\|\nabla P_{\leq N_2}w\|_{L_{t,x}^{10/3}}\|P_{\leq N_2}w\|_{L_{t,x}^{10(p-2)}}^{p-3}\|P_{\leq N_2}(u-v)\|_{L_{t,x}^{10(p-2)}}\bigg)\\
&\lesssim N_2^{\frac{p-5}{2}+}E^{p-1}\|u-v\|_{X^{1,b}},\end{split}\] and \begin{equation*}\begin{split}\|P_{N_2}R'\|_{L_{t,x}^{(5/2)+}}&\lesssim \|P_{>N_2}(u-v)\|_{L_{t,x}^{[10/(6-p)]+}}\|P_{\leq N_2}w\|_{L_{t,x}^{10-}}^{p-2}\\&+\|P_{>N_2}w\|_{L_{t,x}^{[10/(6-p)]+}}\left(\|w\|_{L_{t,x}^{10-}}+\|P_{\leq N_2}w\|_{L_{t,x}^{10-}}\right)^{p-3}\left(\|u-v\|_{L_{t,x}^{10-}}+\|P_{\leq N_2}(u-v)\|_{L_{t,x}^{10-}}\right)\\
&\lesssim N_2^{\frac{p-5}{2}+}E^{p-1}\|u-v\|_{X^{1,b}},\end{split}\end{equation*} which completes the proof that $\mathcal{N}$ is a contraction mapping.

\bigskip

\noindent (2) Assume that $\|f\|_{H^2}\leq A$. Using the same methods as above, we only need to show that $\|u\|_{X^{2,b}}\lesssim A$ and $\|u\|_{X^{1,b}}\lesssim E$ implies\[\left\|\chi(t)\nabla^2(|u|^{p-1}u)\right\|_{X^{0,b'-1}}\lesssim E^{p-1}A.\] Notice that \[\nabla^2(|u|^{p-1}u)=F_{p-1}(u)\nabla^2 u+F_{p-1}(u)\nabla^2\overline{u}+F_{p-2}(u)(\nabla u)^{2}+ F_{p-2}(u)(\nabla u)\cdot(\nabla\overline{u})+F_{p-2}(u)(\nabla\overline{u})^{2},\] where we use the convention that $F_r$ denotes a function of type $r$. The bounds for $F_{p-1}(u)\nabla^2 u$ and $F_{p-1}(u)\nabla^2\overline{u}$ is proved in exactly the same way as above, using $\|u\|_{X^{2,b}}$ to control the $\nabla^2u$ and $\nabla^2\overline{u}$ factors; for the other terms we will only consider $F_{p-2}(u)(\nabla u)\cdot(\nabla\overline{u})$, the rest being similar. Decompose\[F_{p-2}(u)(\nabla u)\cdot(\nabla\overline{u})=\sum_{N_2,N_3,N_4}P_{N_2}F_{p-2}(u)\cdot P_{N_3}\nabla u\cdot P_{N_4}\nabla\overline{u},\] and denote $\max(N_2,N_3,N_4)=N$. Without loss of generality we may assume $N_4\geq N_3$; if $N_4\gtrsim N$ we have\[\begin{split}\mathcal{M}&:=\left\|\chi(t)P_{N_2}F_{p-2}(u)\cdot P_{N_3}\nabla u\cdot P_{N_4}\nabla\overline{u}\right\|_{X^{0,b'-1}}\\&\lesssim N^{o(1)} \left\|P_{N_2}F_{p-2}(u)\cdot P_{N_3}\nabla u\cdot P_{N_4}\nabla\overline{u}\right\|_{L_{t,x}^{10/7+}}\\
&\lesssim N^{o(1)} \|P_{N_2}F_{p-2}(u)\|_{L_{t,x}^{[10/(p-2)]-}}\|P_{N_3}\nabla u\|_{L_{t,x}^{[10/(6-p)]+}}\|P_{N_4}\nabla\overline{u}\|_{L_{t,x}^{10/3}}\\
&\lesssim N^{o(1)} \|u\|_{L_{t,x}^{10-}}^{p-2}\|P_{N_3}\nabla u\|_{L_{t,x}^{[10/(6-p)]+}}\|P_{N_4}\nabla\overline{u}\|_{L_{t,x}^{10/3}}\\
&\lesssim N^{o(1)} E^{p-2}N_3^{1+\frac{p-5}{2}+}E\cdot N_4^{-1+}A\lesssim N^{\frac{p-5}{2}+}E^{p-1}A,\end{split}\]
 and when $N_4\ll N$ we must have $N_2\gtrsim N$, so by Proposition \ref{nonlem} we have \begin{multline}\|P_{N_2}F_{p-2}(u)\|_{L_{t,x}^{10/3+}}\lesssim N_2^{o(1)}\|u\|_{L_{t,x}^{10-}}^{p-3}\sum_{M}\min(1,N_2^{-1}M)\|P_{M}u\|_{L_{t,x}^{[10/(6-p)]+}}\\
\lesssim N_2^{o(1)}E^{p-3}\sum_{M}\min(1,N_2^{-1}M)M^{\frac{p-5}{2}+}E\lesssim N_2^{\frac{p-5}{2}+}E^{p-2},\end{multline}thus \[\begin{split}\mathcal{M}&\lesssim N^{o(1)} \left\|P_{N_2}F_{p-2}(u)\cdot P_{N_3}\nabla u\cdot P_{N_4}\nabla\overline{u}\right\|_{L_{t,x}^{10/7+}}\\
&\lesssim N^{o(1)} \|P_{N_2}F_{p-2}(u)\|_{L_{t,x}^{10/3+}}\|P_{N_3}\nabla u\|_{L_{t,x}^{10-}}\|P_{N_4}\nabla\overline{u}\|_{L_{t,x}^{10/3}}\\
&\lesssim N^{\frac{p-5}{2}+}E^{p-2}\cdot N_3N_{4}^{-1}\|P_{N_3}u\|_{L_{t,x}^{10-}}\|P_{N_4}\nabla^2\overline{u}\|_{L_{t,x}^{10/3}}\\
&\lesssim N^{\frac{p-5}{2}+}E^{p-1}A,\end{split}\] as desired.
\end{proof}
\section{Proof of Theorem \ref{main}: general case}

We shall use the \emph{I-method}. Recall the multiplier $\mathcal{D}$ defined in (\ref{mult}).

\begin{prop}\label{incre} 
 Suppose $\|u(0)\|_{H^1} \leq E$ and $\| \mathcal{D} u(0) \|_{H^1} \leq C_1 E$, for a constant $C_1>0$. Choose $\varepsilon$ as in Proposition~\ref{locale}.  Then we have
\[\big|E[\mathcal{D}u(T)]-E[\mathcal{D}u(0)]\big|\lesssim N^{\max(p-5,-1)+}+N^{o(1)}\sum_{M}M^{o(1)}\min(1,N^{-1}M)\|P_M\mathcal{D}u\|_{L_{t,x}^{[10/(6-p)]+}([0,\varepsilon]\times\mathbb{T}^3)} \] 
for $0\leq T\leq \varepsilon$, with the energy functional $E[u]$ defined in (\ref{conserve}).
\end{prop}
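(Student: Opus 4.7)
The plan is to derive an energy identity for $E[\mathcal{D}u]$ by differentiating in $t$ and exploiting the equation; the resulting pointwise integrand is a commutator-like expression that vanishes when $\mathcal{D}$ acts trivially on every participating frequency. Since $\mathcal{D}u$ satisfies $(i\partial_t+\Delta_\beta)\mathcal{D}u=\mathcal{D}(|u|^{p-1}u)$, a direct computation -- subtract the formal conservation law that \emph{would} hold if the right-hand side were $|\mathcal{D}u|^{p-1}\mathcal{D}u$, then integrate by parts once to move a derivative off $\Delta_\beta\overline{\mathcal{D}u}$ -- yields
\[
\frac{d}{dt}E[\mathcal{D}u]=\operatorname{Im}\int_{\mathbb{T}^3}\Bigl(\sum_{i=1}^3\beta_i\,\partial_i\overline{\mathcal{D}u}\,\partial_i R+|\mathcal{D}u|^{p-1}\overline{\mathcal{D}u}\,R\Bigr)\,dx,
\]
where $R:=\mathcal{D}(|u|^{p-1}u)-|\mathcal{D}u|^{p-1}\mathcal{D}u$ measures the failure of $\mathcal{D}$ to commute with the nonlinearity. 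Integrating in $t$ over $[0,T]\subset[0,\varepsilon]$ reduces the claim to bounding the spacetime integrals $\mathrm{I}:=\int_0^\varepsilon\!\int|\nabla\mathcal{D}u||\nabla R|\,dx\,dt$ and $\mathrm{II}:=\int_0^\varepsilon\!\int|\mathcal{D}u|^p|R|\,dx\,dt$.

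For term $\mathrm{I}$, I would expand $\nabla R$ as a sum of products of one $\nabla\mathcal{D}u$-type factor and $p-1$ non-derivative $\mathcal{D}u$-type factors, and apply H\"older with the split $\|\nabla\mathcal{D}u\|_{L^{10/3-}}^2\|\mathcal{D}u\|_{L^{10-}}^{p-2}\|P_M\mathcal{D}u\|_{L^{[10/(6-p)]+}}$ (the exponents balance: $\tfrac{2}{10/3}+\tfrac{p-2}{10}+\tfrac{6-p}{10}=1$). The ``bulk'' Strichartz norms are $\lesssim_E 1$: Proposition~\ref{locale} gives $\|u\|_{X^{1,1/2+,[0,\varepsilon]}}\lesssim_E 1$, and together with the hypothesis $\|\mathcal{D}u(0)\|_{H^1}\leq C_1 E$ and the boundedness of $\mathcal{D}$ on $X^{1,b}$ this yields $\|\mathcal{D}u\|_{X^{1,1/2+,[0,\varepsilon]}}\lesssim_E 1$; Proposition~\ref{linear}(4) and Remark~\ref{int} then supply the stated $L^{10/3-}$ and $L^{10-}$ bounds. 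Term $\mathrm{II}$ is treated by an analogous H\"older decomposition, placing one factor in $L^{[10/(6-p)]+}$ and the remaining $2p-1$ factors in a Strichartz-admissible $L^q$ controlled (via finite-measure embedding on the spacetime slab) by $\|\mathcal{D}u\|_{L^{10-}}\lesssim_E 1$.

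The weight $\min(1,N^{-1}M)$ is extracted by dyadically decomposing $R$ and invoking the nonlinear lemmas. Proposition~\ref{propertyd}(\ref{nonlemd}) applied to $P_K\mathcal{D}(|u|^{p-1}u)$ (permitted since $|z|^{p-1}z$ is of type $p\geq 2$) and Proposition~\ref{nonlem} applied to $P_K(|\mathcal{D}u|^{p-1}\mathcal{D}u)$ both yield
\[
\|P_K R\|\lesssim K^{o(1)}\|\mathcal{D}u\|_{L^{10-}}^{p-1}\sum_M\min(1,K^{-1}M)\|P_M\mathcal{D}u\|_{L^{[10/(6-p)]+}}.
\]
The promotion from $\min(1,K^{-1}M)$ to $\min(1,N^{-1}M)$ exploits the cancellation inside $R$: for $K\ll N$, the multiplier $\mathcal{D}$ is the identity at frequency $K$, so $P_K R=P_K[F(u)-F(\mathcal{D}u)]$ with $F(z)=|z|^{p-1}z$, and this difference carries a factor of $(I-\mathcal{D})u$, which is supported on frequencies $\gtrsim N$. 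Summing over the output scale $K$ and reorganizing by the distinguished factor's frequency $M$ then delivers the announced weight. Residual contributions in which no distinguished factor is reserved carry an obligatory $P_{\gtrsim N}\mathcal{D}u$ factor costing at most $N^{(p-5)/2+}$ by Strichartz; sharper derivative counting absorbs them into $N^{\max(p-5,-1)+}$.

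The main obstacle is the $K$-summation: naively summing $\sum_K K^{o(1)}\min(1,K^{-1}M)$ over unrestricted $K$ diverges, so the cancellation in $R$ must be genuinely used to localize the effective range of $K$ (rather than bounding $\mathcal{D}F(u)$ and $F(\mathcal{D}u)$ independently in the regime $K\ll N$) and thereby promote $\min(1,K^{-1}M)$ to $\min(1,N^{-1}M)$ without extra losses beyond the tolerated $N^{o(1)}$, $M^{o(1)}$ slack.
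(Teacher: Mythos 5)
Your proposal captures the right skeleton — the modified energy identity (matching the paper's, which splits $\partial_t E[\mathcal{D}u]$ into the gradient term and the potential term driven by $\mathcal{R}=\mathcal{D}(|u|^{p-1}u)-|\mathcal{D}u|^{p-1}\mathcal{D}u$), the H\"older splitting with exponents $10/3,\,10,\,10/(6-p)$, and the observation that the cancellation in $\mathcal{R}$ must be used to upgrade the weight from $\min(1,K^{-1}M)$ to $\min(1,N^{-1}M)$. However, there is a genuine gap in the treatment of term $\mathrm{I}$: you cannot simply H\"older the full unlocalized quantities, because the $\min(1,N^{-1}M)$ factor must be extracted \emph{after} dyadically decomposing $\nabla\mathcal{R}$, and once you do, the two gradient factors $\partial_i\overline{\mathcal{D}u}$ and $\nabla u$ (or $\nabla\mathcal{D}u$) live on frequency annuli that must be matched, while the ``$\mathcal{L}$-factor'' $F_{p-1}(u)-F_{p-1}(u_1)$ sits at an independent dyadic scale $K$. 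If the gradient factor lives at frequency $N_3\gg K$ and one applies Strichartz directly to $P_{N_3}$, the $N_3^{o(1)}$ Strichartz loss is not summable against $\|\mathcal{D}u\|_{X^{1,b}}\lesssim 1$ (only $\|u\|_{X^{2,b}}\lesssim N$ is available, which does not control the high-$N_3$ tail uniformly). The paper's fix is to chop the high-frequency gradient factor into cubes $B$ of size $K$, apply the Galilean-invariant cube Strichartz (Proposition~\ref{linear}(4) for $P_Bu$) which costs only $K^{o(1)}$, observe that $P_K\mathcal{L}\cdot\partial_i P_B u$ has Fourier support in $10B$, and exploit almost-orthogonality over $B$ to produce an $\ell^2_B$ structure that sums to $\lesssim\|\nabla\mathcal{D}u\|_{X^{0,b}}^2\lesssim 1$. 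Your outline contains no analogue of this step, and without it the $K$-summation you identify as the obstacle cannot be closed.

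A second missing ingredient is the commutator term $[\mathcal{D},\mathcal{H}]\nabla u$ with $\mathcal{H}=F_{p-1}(u_1)$ (equivalently $F_{p-1}(\mathcal{D}u)$ in your framing, since $\mathcal{D}u_1=u_1$). Your decomposition of $F(u)-F(\mathcal{D}u)$ produces a factor of $(I-\mathcal{D})u$, but after peeling that off one is left with a pure commutator of the Fourier multiplier $\mathcal{D}$ against a low-frequency symbol; the paper controls this by a Coifman--Meyer paraproduct bound combined with the transference principle, which produces the crucial $\min(1,KN^{-1})$ factor. Finally, a small point: the sum $\sum_K K^{o(1)}\min(1,K^{-1}M)$ does not actually diverge (it is $\sim M^{o(1)}$); the real issue is not divergence but that the resulting weight would be $M^{o(1)}$ rather than $\min(1,N^{-1}M)$, which is too weak to give any gain from subcriticality.
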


\begin{rem} In the above inequality, we chose to keep on the right-hand side the expression $\|P_M\mathcal{D}u\|_{L_{t,x}^{[10/(6-p)]+}([0,\varepsilon]\times\mathbb{T}^3)}$ instead of estimating it by $M^{\frac{p-5}{2}}$, which would be possible through the $X^{s,b}$ norm derived in the proof below. This will be crucial to get the improvement on irrational tori: indeed, we will prove in the next section that, on irrational tori, a better estimate of $\|P_M\mathcal{D}u\|_{L_{t,x}^{[10/(6-p)]+}}$ becomes available.
\end{rem}

\begin{proof} \underline{Step 1: the modified energy indentity.} From the assumption, we know that (recall that implicit constants depend on $E$)\[\|u(0)\|_{H^1}\lesssim 1,\quad \|u(0)\|_{H^2}\lesssim N.\] By Proposition \ref{locale}, one has that\[\|u\|_{X^{1,b,[-\varepsilon,\varepsilon]}}\lesssim1,\quad \|u\|_{X^{2,b,[-\varepsilon,\varepsilon]}}\lesssim N.\] This gives that\[\|\mathcal{D}u\|_{X^{1,b,[-\varepsilon,\varepsilon]}}\lesssim1.\] By considering a suitable extension of $u$, we may assume $\|\mathcal{D}u\|_{X^{1,b}}\lesssim1$.

Now let us compute the time evolution of $E[\mathcal{D}u]$. In fact, one has that\[(i\partial_t+\Delta_{\beta})(\mathcal{D}u)=|\mathcal{D}u|^{p-1}(\mathcal{D}u)+\mathcal{R},\] where\[\mathcal{R}=\mathcal{D}(|u|^{p-1}u)-|\mathcal{D}u|^{p-1}(\mathcal{D}u).\] Now by conservation of energy for (\ref{nls}), one has that
\begin{equation}\begin{aligned}\partial_tE[\mathcal{D}u]&=\int_{\mathbb{T}^3}\left\{\sum_{i=1}^3\beta_i\Re(\partial_i\overline{\mathcal{D}u}\cdot\partial_i(\mathcal{D}u)_t)+\Re(\overline{\mathcal{D}u}\cdot(\mathcal{D}u)_t)\cdot|\mathcal{D}u|^{p-1}\right\}\,\mathrm{d}x\\
&=\sum_{i=1}^3\beta_i\Im\int_{\mathbb{T}^3}(\partial_i\overline{\mathcal{D}u}\cdot \partial_i\mathcal{R})\,\mathrm{d}x+\Im\int_{\mathbb{T}^3}|\mathcal{D}u|^{p-1}\overline{\mathcal{D}u}\cdot\mathcal{R}\,\mathrm{d}x.\end{aligned}\end{equation}
Thus, upon integrating in $t$, we reduce to estimating the space-time integrals\begin{equation}\label{estimate1}\int_{[0,T]\times\mathbb{T}^3}(\partial_i\overline{\mathcal{D}u}\cdot \partial_i\mathcal{R})\,\mathrm{d}x\mathrm{d}t\end{equation} and \begin{equation}\label{estimate2}\int_{[0,T]\times\mathbb{T}^3}|\mathcal{D}u|^{p-1}\overline{\mathcal{D}u}\cdot\mathcal{R}\,\mathrm{d}x\mathrm{d}t.\end{equation}

\bigskip
\noindent \underline{Step 2: bound for (\ref{estimate2}).}
Let $u_1=P_{\leq N/10}u$ and $u_2=u-u_1=P_{>N/10}u$, note that \[\begin{aligned}\mathcal{R}&=\mathcal{D}(|u|^{p-1}u)-|\mathcal{D}u|^{p-1}(\mathcal{D}u)\\&=\mathcal{D}(|u|^{p-1}u-|u_1|^{p-1}u_1)-\left[|\mathcal{D}u|^{p-1}(\mathcal{D}u)-|\mathcal{D}u_1|^{p-1}(\mathcal{D}u_1)\right]-(1-\mathcal{D})\left(|u_1|^{p-1}u_1\right).\end{aligned}\] For the first term, using the identity\[|u|^{p-1}u-|u_1|^{p-1}u_1=u_2\cdot\int_0^1F_{p-1}(u_1+\theta u_2)\,\mathrm{d}\theta+\overline{u_2}\cdot\int_0^1F_{p-1}(u_1+\theta u_2)\,\mathrm{d}\theta,\] and using (\ref{product}), (\ref{nonlinear}), H\"{o}lder and Strichartz, we can bound the corresponding contribution by 
\[\begin{split}
& \left\||\mathcal{D}u|^{p-1}\overline{\mathcal{D}u}\right\|_{L_{t,x}^{(10/p)-}}\cdot\left\|\mathcal{D}(|u|^{p-1}u-|u_1|^{p-1}u_1)\right\|_{L_{t,x}^{[10/(10-p)]+}}\\
&\quad \lesssim\|\mathcal{D}u\|_{L_{t,x}^{10-}}^{p}\|\mathcal{D}u_2\|_{L_{t,x}^{q_1}}\left(\|\mathcal{D}u\|_{L_{t,x}^{10-}}+\|\mathcal{D}u_1\|_{L_{t,x}^{10-}}\right)^{p-1}\lesssim N^{p-5},\end{split}\] where $q_1=10/(11-2p)+$. The same bound holds for the second term, using the fact that\[\left||\mathcal{D}u|^{p-1}(\mathcal{D}u)-|\mathcal{D}u_1|^{p-1}(\mathcal{D}u_1)\right|\lesssim|\mathcal{D}u_2|\cdot(|\mathcal{D}u|+|\mathcal{D}u_1|)^{p-1}.\] For the last term, notice that \[\begin{split}\left\|(1-\mathcal{D})\left(|u_1|^{p-1}u_1\right)\right\|_{L_{t,x}^{[10/(10-p)]+}}&\lesssim N^{-1}\|\nabla(|u_1|^{p-1}u_1)\|_{L_{t,x}^{[10/(10-p)]+}}\\&\lesssim N^{-1}\|u_1\|_{L_{t,x}^{10-}}^{p-1}\|\nabla u_1\|_{L_{t,x}^{q_1}}\lesssim N^{-1}.\end{split}\] Gathering the estimates, we find that \[(\ref{estimate2})\lesssim N^{\max(p-5,-1)+}.\]

\bigskip

\noindent \underline{Step 3: bound for (\ref{estimate1}).} Note that \[\nabla\mathcal{R}=\mathcal{D}\left[F_{p-1}(u)\nabla u+F_{p-1}(u)\nabla\overline{u}\right]-\left[F_{p-1}(\mathcal{D}u)\nabla(\mathcal{D}u)+F_{p-1}(\mathcal{D}u)\nabla\overline{\mathcal{D}u}\right].\] Since the other term is similar, we only consider the term \[\mathcal{D}(F_{p-1}(u)\nabla u)-F_{p-1}(\mathcal{D}u)\nabla(\mathcal{D}u),\] which can be decomposed as\begin{equation}\label{decomposition}\mathcal{D}[(F_{p-1}(u)-F_{p-1}(u_1))\nabla u]-[F_{p-1}(\mathcal{D}u)-F_{p-1}(\mathcal{D}u_1)]\nabla(\mathcal{D}u)+[\mathcal{D}(\mathcal{H}\nabla u)-\mathcal{H}\nabla\mathcal{D}u],\end{equation} where $\mathcal{H}=F_{p-1}(u_1)$. 
For the first term in (\ref{decomposition}), denote $\mathcal{L}=F_{p-1}(u)-F_{p-1}(u_1)$; note that \[\mathcal{L}=u_2\cdot\int_0^1 F_{p-2}(u_1+\theta u_2)\,\mathrm{d}\theta+\overline{u_2}\cdot\int_0^1F_{p-2}(u_1+\theta u_2)\,\mathrm{d}\theta.\] We see that \[\|\mathcal{D}P_K\mathcal{L}\|_{L_{t,x}^{5/2}}\lesssim \|\mathcal{D}u_2\|_{L_{t,x}^{[10/(6-p)]+}}\left(\|\mathcal{D}u_1\|_{L_{t,x}^{10-}}+\|\mathcal{D}u_2\|_{L_{t,x}^{10-}}\right)^{p-2}\lesssim \sum_{M\geq N/10}\|\mathcal{D}P_Mu\|_{L_{t,x}^{[10/(6-p)]+}}\] if $K\leq N$, and (note that $u_1=P_{\leq N/10}u$ satisfies the same estimates as $u$)\begin{equation}\begin{split}\|\mathcal{D}P_K\mathcal{L}\|_{L_{t,x}^{5/2}}&\lesssim\left\|\mathcal{D}P_KF_{p-1}(u)\right\|_{L_{t,x}^{5/2}}+\left\|\mathcal{D}P_KF_{p-1}(u_1)\right\|_{L_{t,x}^{5/2}}\\&\lesssim K^{o(1)}\|\mathcal{D}u\|_{L_{t,x}^{10-}}^{p-2}\sum_{M}\min(1,K^{-1}M)\|P_M\mathcal{D}u\|_{L_{t,x}^{[10/(6-p)]+}}\\&\lesssim K^{o(1)}\sum_{M}\min(1,K^{-1}M)\|P_M\mathcal{D}u\|_{L_{t,x}^{[10/(6-p)]+}}\end{split}\end{equation} by (\ref{nonlemd}) if $K\geq N$, which gives by summation in $K$ that\begin{equation}\label{rdecay}\sum_K K^{o(1)}\|\mathcal{D}P_K\mathcal{L}\|_{L_{t,x}^{5/2}}\lesssim N^{o(1)}\bigg(\sum_{M\geq N/10}\|\mathcal{D}P_Mu\|_{L_{t,x}^{[10/(6-p)]+}}+\sum_{M}M^{o(1)}\min(1,N^{-1}M)\|P_M\mathcal{D}u\|_{L_{t,x}^{[10/(6-p)]+}}\bigg).\end{equation} Then, the contribution corresponding to this term can be decomposed as\[\int_{[0,T]\times\mathbb{T}^3}\partial_i\overline{\mathcal{D}u}\cdot\mathcal{D}(\mathcal{L}\partial_iu)\,\mathrm{d}x\mathrm{d}t=\sum_{K}\sum_{B} \int_{[0,T]\times\mathbb{T}^3}\partial_iP_{10B}\overline{\mathcal{D}u}\cdot\mathcal{D}(P_K\mathcal{L}\cdot \partial_iP_Bu)\,\mathrm{d}x\mathrm{d}t,\] where for fixed $K$, $B$ runs over some partition of $\mathbb{R}^3$ into cubes of size $K$. By orthogonality, this is bounded by \begin{multline*}\sum_K\sum_B \left\|\partial_iP_{10B}\overline{\mathcal{D}u}\right\|_{L_{t,x}^{10/3}}\left\|\mathcal{D}P_K\mathcal{L}\right\|_{L_{t,x}^{5/2}}\left\|\mathcal{D}\partial_iP_Bu\right\|_{L_{t,x}^{10/3}}\\\lesssim \sum_KK^{o(1)}\|\mathcal{D}P_K\mathcal{L}\|_{L_{t,x}^{5/2}}\bigg(\sum_B\left\|\partial_iP_{10B}\overline{\mathcal{D}u}\right\|_{X^{0,b}}^2\bigg)^{1/2}\bigg(\sum_B \left\|\mathcal{D}\partial_iP_Bu\right\|_{X^{0,b}}^{2}\bigg)^{1/2}\\
\lesssim N^{o(1)}\sum_{M}M^{o(1)}\min(1,N^{-1}M)\|P_M\mathcal{D}u\|_{L_{t,x}^{[10/(6-p)]+}}.\end{multline*} 

By analogous arguments, the same bound can be obtained for the second term in (\ref{decomposition}).

For the last term in (\ref{decomposition}), we use the same trick and decompose \begin{multline}\int_{[0,T]\times\mathbb{T}^3}\partial_i\overline{\mathcal{D}u}\cdot[\mathcal{D}(\mathcal{H}\partial_iu)-\mathcal{H}\partial_i\mathcal{D}u]\,\mathrm{d}x\mathrm{d}t\\=\sum_{K}\sum_{B} \int_{[0,T]\times\mathbb{T}^3}\partial_iP_{10B}\overline{\mathcal{D}u}\cdot[\mathcal{D}(P_K\mathcal{H}\cdot \partial_iP_{B}u)-P_K\mathcal{H}\cdot \partial_i\mathcal{D}P_Bu]\,\mathrm{d}x\mathrm{d}t,\end{multline} where $B$ runs over some partition of $\mathbb{R}^3$ into cubes of size $K$. By (\ref{nonlemd}) and the fact that $\|\mathcal{D}u\|_{L_{t,x}^{10-}}\lesssim 1$, we have\[\|\mathcal{D}P_K\mathcal{H}\|_{L_{t,x}^{5/2}}\lesssim K^{o(1)}\sum_{M\lesssim N}\min(1,K^{-1}M)\|P_M\mathcal{D}u\|_{L_{t,x}^{[10/(6-p)]+}}.\]
 Moreover, by definition of $\mathcal{D}$ we have\[\mathcal{F}[\mathcal{D}(P_K\mathcal{H}\cdot \partial_iP_{B}u)-P_K\mathcal{H}\cdot \partial_i\mathcal{D}P_Bu](k)=\sum_{l+m=k}\bigg[\theta\bigg(\frac{k}{N}\bigg)-\theta\bigg(\frac{m}{N}\bigg)\bigg]\widehat{P_K\mathcal{H}}(l)\widehat{\partial_iP_Bu}(m),\] and the symbol \[\theta\bigg(\frac{k}{N}\bigg)-\theta\bigg(\frac{m}{N}\bigg)=\frac{l}{N}\cdot\int_0^1\nabla \theta\bigg(\frac{m+\gamma l}{N}\bigg)\,\mathrm{d}\gamma,\] thus by Coifman-Meyer theory and transference principle we have\[\left\|\mathcal{D}(P_K\mathcal{H}\cdot \partial_iP_{B}u)-P_K\mathcal{H}\cdot \partial_i\mathcal{D}P_Bu\right\|_{L_{t,x}^{10/7}}\lesssim \min(1,KN^{-1})\|\mathcal{D}P_K\mathcal{H}\|_{L_{t,x}^{5/2}}\|\mathcal{D}\nabla P_Bu\|_{L_{t,x}^{10/3}}.\] 
After summing in $K$ and $B$ and using orthogonality, this gives that \begin{equation}
\begin{split}& \left|\int_{[0,T]\times\mathbb{T}^3}\partial_i\overline{\mathcal{D}u}\cdot[\mathcal{D}(\mathcal{H}\partial_iu)-\mathcal{H}\partial_i\mathcal{D}u]\,\mathrm{d}x\mathrm{d}t\right|\\
&\quad \lesssim\sum_K\sum_B \min(1,KN^{-1})\|\mathcal{D}P_K\mathcal{H}\|_{L_{t,x}^{5/2}}\| \mathcal{D} \nabla P_Bu\|_{L_{t,x}^{10/3}}\|\nabla P_{10B}\overline{\mathcal{D}u}\|_{L_{t,x}^{10/3}}\\
&\quad \lesssim\sum_{K}\min(1,KN^{-1})\| \mathcal{D} P_K\mathcal{H}\|_{L_{t,x}^{5/2}}\bigg(\sum_{B}\|\nabla P_B\mathcal{D}u\|_{X^{0,b}}^2\bigg)^{1/2}\bigg(\sum_{B}\|\nabla P_{10B}\overline{\mathcal{D}u}\|_{X^{0,b}}^2\bigg)^{1/2}\\
&\quad\lesssim\sum_{K}\min(1,KN^{-1})\cdot K^{o(1)}\sum_{M\lesssim N}\min(1,K^{-1}M)\|P_M\mathcal{D}u\|_{L_{t,x}^{[10/(6-p)]+}}\\
&\quad\lesssim N^{o(1)}\sum_{M}M^{o(1)}\min(1,N^{-1}M)\|P_M\mathcal{D}u\|_{L_{t,x}^{[10/(6-p)]+}}.\end{split}\end{equation} This completes the proof.
\end{proof}

\begin{proof}[Proof of Theorem \ref{main} in the general case]
First choose $N = A = \| u(0) \|_{H^2}$ and observe that, by Sobolev embedding,
$$
E[ \mathcal{D} u(0) ] \leq C_0 E
$$
for a constant $C_0$. By Strichartz and Proposition \ref{incre}, we know that
$$
\| P_M \mathcal{D} u \|_{L_{t,x}^{[10/(6-p)]+}([0,\varepsilon]\times\mathbb{T}^3)}\lesssim M^{\frac{p-5}{2}+}.
$$
As long as $T$ is such that
$$
\sup_{0\leq t\leq T}E[\mathcal{D}u(t)]\leq 2 C_0 E \qquad \forall t \in [0,T],
$$
we learn from iterating Proposition \ref{incre} that
$$
\sup_{0\leq t\leq T}E[\mathcal{D}u(t)] - C_0 E \lesssim N^{\max(p-5,-1)}T+N^{o(1)}\sum_MM^{o(1)}\min(1,N^{-1}M)\cdot TM^{\frac{p-5}{2}+}\lesssim N^{\frac{p-5}{2}+}T.
$$
By a bootstrap argument, this gives that $E[\mathcal{D}u(t)]\leq 2C_0 E$, and hence $\|u(t)\|_{H^2}\leq 4C_0 A$, up to a time 
\[
T \sim A^{\frac{5-p}{2}-}.
\] 
This implies immediately (\ref{growth}).
\end{proof}

\section{Proof of Theorem \ref{main}: irrational case}\label{irr}  Let $q_0=10/(6-p)+$ and $\sigma=1/2-5/q_0=(p-5)/2+$ (notice that $-1<\sigma<0$). By Proposition \ref{locale} and Strichartz, we know that if $u$ is a solution to (\ref{nls}) with $\|u(0)\|_{H^1}\leq E$, then\begin{equation}\label{localles}\|P_Ku\|_{L_{t,x}^{q_0}([0,\varepsilon]\times\mathbb{T}^3)}\lesssim K^{\sigma+}\end{equation} for any dyadic $K$. The improvement in the irrational case will be based on an improvement of (\ref{localles}), namely we have the following 
\begin{prop}\label{long} Suppose $\beta=(\beta_i)$ satisfies the long-time Strichartz estimates (Proposition \ref{linear}, part (6)). Under the assumption that $u$ is a solution to (\ref{nls}) and \begin{equation}\label{nrgctrl}\sup_{0\leq t\leq \varepsilon K^{\gamma}}\|\mathcal{D}u(t)\|_{H^1}\lesssim 1,\end{equation} we have
\begin{equation}\label{globalles}\sum_{m=0}^{K^{\gamma}}\|P_K\mathcal{D}u\|_{L_{t,x}^{q_0}([m\varepsilon,(m+1)\varepsilon]\times\mathbb{T}^3)}\lesssim K^{\gamma+\sigma+}\cdot\max(K^{-\frac{\gamma}{q_0}},K^{\gamma+\theta_1}). \end{equation}
\end{prop}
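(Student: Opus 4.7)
The plan is to transfer the linear long-time Strichartz estimate of Proposition \ref{linear}(6) to the nonlinear solution by Duhamel, and then close a bootstrap using the subcritical exponent $\sigma<0$. Set $X_K := \sum_{m=0}^{K^\gamma}\|P_K\mathcal{D}u\|_{L^{q_0}_{t,x}(I_m\times\mathbb{T}^3)}$ with $I_m := [m\varepsilon,(m+1)\varepsilon]$. First I would fix $\gamma$ small enough that $\varepsilon K^\gamma\le K^{\nu(q_0)}$, so that Proposition \ref{linear}(6) applies on the entire window $[0,\varepsilon K^\gamma]$, and write Duhamel from $0$:
\[
P_K\mathcal{D}u(t) \;=\; P_K\mathcal{D}e^{it\Delta_\beta}u(0) \;-\; i\int_0^t e^{i(t-s)\Delta_\beta}P_K\mathcal{D}(|u|^{p-1}u)(s)\,\mathrm{d}s.
\]

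For the homogeneous term, Proposition \ref{linear}(6) combined with $\|P_K\mathcal{D}u(0)\|_{L^2}\lesssim K^{-1}$ (from \eqref{nrgctrl}) yields $\|P_K\mathcal{D}e^{it\Delta_\beta}u(0)\|_{L^{q_0}([0,\varepsilon K^\gamma]\times\mathbb{T}^3)}\lesssim K^{\sigma+}$; H\"older in time over the $\sim K^\gamma$ unit subintervals then contributes $K^{\gamma+\sigma-\gamma/q_0+}$ to $X_K$, which matches the first argument of the maximum in \eqref{globalles}. For the Duhamel term, I would split $\int_0^t = \sum_{m'\le m}\int_{I_{m'}\cap[0,t]}$ for $t\in I_m$: the present contribution ($m'=m$) is controlled by the same local-theory estimates used in the proof of Proposition \ref{incre} and yields $K^{\sigma+}$ per interval, i.e.\ $K^{\gamma+\sigma+}$ after summation (the second argument of the maximum, with $\theta_1$ encoding a small refinement coming from subcriticality); each past contribution ($m'<m$) is the free evolution, starting at time $m'\varepsilon$, of the $L^2$-datum $w_{m'} := \int_{I_{m'}}e^{i(m'\varepsilon-s)\Delta_\beta}P_K\mathcal{D}(|u|^{p-1}u)(s)\,\mathrm{d}s$, and Proposition \ref{linear}(6) applies once more to reduce its $L^{q_0}([m'\varepsilon,\varepsilon K^\gamma])$ norm to a bound on $\|w_{m'}\|_{L^2}$, which in turn can be rewritten by duality as a multilinear quantity in the $\|P_M\mathcal{D}u\|_{L^{q_0}(I_{m'})}$ via Propositions \ref{nonlem} and \ref{propertyd}.

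The bootstrap closes because every such nonlinear term carries a prefactor of order $K^{\sigma+}$ with $\sigma=(p-5)/2<0$: the inhomogeneous contributions are therefore dominated by the linear one and by the trivial per-interval bound, so that $X_K$ satisfies the claimed maximum. The main technical obstacle is the past-contribution step, where one must extract enough smallness in $K$ from $\|w_{m'}\|_{L^2}$ that, after being multiplied by the long-time linear norm $K^{\sigma+1+}$ and summed over $m'$, the result beats the $K^\gamma$ loss from having $K^\gamma$ past intervals. This is precisely where the subcritical structure of the nonlinearity $p<5$ enters decisively, and it is also the place where the second argument $K^{\gamma+\theta_1}$ of the maximum is forced on us, reflecting the limited efficiency of the subcritical gain at this step.
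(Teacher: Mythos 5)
There is a genuine gap. Your plan to compare $P_K\mathcal{D}u$ with the free flow via Duhamel and then feed in the long-time Strichartz estimate is the right high-level idea, and matches the paper at that level of abstraction. But the step where "every such nonlinear term carries a prefactor of order $K^{\sigma+}$" is the trivial local-theory bound, which is exactly what must be beaten: summing it over the $K^\gamma$ unit intervals reproduces the baseline $K^{\gamma+\sigma+}$ and does not yield $K^{\gamma+\sigma}\cdot K^{\gamma+\theta_1}$. Concretely, the dominant part of the nonlinearity is $\sim \frac{p+1}{2}|P_{\leq K/10}u|^{p-1}\cdot P_K\mathcal{D}u$, and the $L^2$/$H^1$ norm of its Duhamel increment $w_{m'}$ over one unit interval is $O(1)$ in $H^1$, with no smallness in $K$ whatsoever. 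Plugging $\|w_{m'}\|_{H^1}\lesssim 1$ into the long-time Strichartz estimate and summing over the $K^\gamma$ past intervals produces $K^{\gamma+\gamma(1-1/q_0)+\sigma+}$, which is \emph{worse} than the trivial bound. The subcritical exponent $\sigma<0$ alone does not save the argument; the gain must come from elsewhere.

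The paper supplies two mechanisms that your proposal does not contain. First, a gauge transform (normal form): the zero spatial mode $h(t)=\mathbb{P}_0\big(\tfrac{p+1}{2}|u_3|^{p-1}\big)$ of the low-frequency potential is removed by conjugating with $\exp(i\int_0^t h)$ and working with $v=e^{iH(t)}P_K\mathcal{D}u$; this eliminates the part of the nonlinearity that is genuinely of order one and cannot carry any $K$-smallness. Second, a near-resonance analysis of the remaining non-resonant low-high term $\mathcal{R}_3=P_K\mathcal{D}(u_2\cdot\mathbb{P}_{\neq 0}\Omega)$: after localizing all modulations to size $\lesssim K^{\gamma_1}$, the resonance identity $|Q(k_1,k_3)|\ll K^{\gamma_1}$ forces the output Fourier support into a union of $O(K^{3\gamma_0})$ rectangular cuboids of dimensions $K\times K\times O(K^{\gamma_1})$, and the thin-cuboid Strichartz estimate (Proposition~\ref{linear}(5)) then supplies the genuine gain $K^{3\gamma_0-(p-3)(1-\gamma_1)/6+}$. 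Optimizing $\gamma_0,\gamma_1$ produces the exponent $\theta_1$. Without both of these ingredients, the iteration does not close, because the error terms you accumulate from past intervals are no smaller than what you are trying to control.

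A secondary issue: you state Duhamel from time $0$ and want to control $\|w_{m'}\|_{L^2}$ ``by duality as a multilinear quantity in the $\|P_M\mathcal{D}u\|_{L^{q_0}(I_{m'})}$,'' but it is $\|w_{m'}\|_{H^1}$ (not $L^2$) that enters, and the $X^{s,b}$ bounds you invoke to estimate it only hold on unit intervals. The paper iterates interval by interval, writes the telescoping identity comparing $v((m+1)\varepsilon)$ to $e^{i\varepsilon\Delta_\beta}v(m\varepsilon)$, and tracks the accumulated error term separately (this is where the loss $K^{2\gamma}$, rather than $K^{\gamma}$, appears in front of $K^{\sigma+\theta_1}$). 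Your formulation is compatible with this, but the mechanism for the gain is missing, and as written the bound claimed for the present-interval contribution is simply asserted rather than proved.
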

\begin{rem} The trivial bound obtain by iterating (\ref{localles}) would be $K^{\gamma+\sigma+}$. We get an improvement for $\gamma<|\theta_1|$.
\end{rem}
\begin{proof} \underline{Step 1: decomposition of the nonlinear term.} Fix $b=1/2+$. The assumption (\ref{nrgctrl}) and Proposition \ref{locale} give a solution $u$ such that $\| u \|_{X^{1,b,[0,\epsilon]}} +\|\mathcal{D}u\|_{X^{1,b,[0,\varepsilon]}}\lesssim 1$. By considering a suitable extension we may assume $\| u \|_{X^{1,b}} + \|\mathcal{D}u\|_{X^{1,b}}\lesssim 1$, and $u$ is compactly supported in time.

Let \[u_1=P_{\leq K/10}u,\quad u_2=u-u_1=P_{>K/10}u.\] By (\ref{nls}) and Taylor expansion one has that
\begin{multline}\label{nonlincut}
(i\partial_t+\Delta_{\beta})P_K\mathcal{D}u=P_K\mathcal{D}(|u|^{p-1}u)=P_K\mathcal{D}\bigg\{|u_1|^{p-1}u_1+\frac{p+1}{2}|u_1|^{p-1}u_2+\overline{u_2}F_{p-1}(u_1)\\+\int_0^1\left[F_{p-2}(u_1+\theta u_2)u_2^2+F_{p-2}(u_1+\theta u_2)u_2\overline{u_2}+F_{p-2}(u_1+\theta u_2)(\overline{u_2})^2\right] \theta\,\mathrm{d}\theta\bigg\}.
\end{multline} Moreover, we fix a scale $K'=K^{\gamma_0}$ and define \[u_3=P_{\leq K'}u,\quad u_4=P_{(K',K/10]}u=u_1-u_3,\] for some $\gamma_0\in(0,1)$ to be determined later, then we can decompose further \[\frac{p+1}{2}|u_1|^{p-1}=h(t)+\mathbb{P}_{\neq 0}\Omega+u_4\cdot\int_0^1 F_{p-2}\left(u_3+\theta u_4\right)\,\mathrm{d}\theta+\overline{u_4}\cdot\int_0^1 F_{p-2}\left(u_3+\theta u_4\right)\,\mathrm{d}\theta,\] where \[h(t)=\mathbb{P}_0\Omega,\quad\Omega=\frac{p+1}{2}|u_3|^{p-1},\] and $\mathbb{P}_0$ denotes the projection onto the zeroth mode. Therefore we get (notice that $P_K\mathcal{D}u_2=P_K\mathcal{D}u$)
\[(i\partial_t+\Delta_{\beta})P_K\mathcal{D}u=h(t) P_K\mathcal{D}u+ \mathcal{R}\,\] 
where
$$
\mathcal{R} = \mathcal{R}_1 + \mathcal{R}_2 + \mathcal{R}_3 + \mathcal{R}_4
$$
and
\begin{align*}\mathcal{R}_1&=P_K\mathcal{D}\left(|u_1|^{p-1}u_1\right),\\
\mathcal{R}_2&=P_K\mathcal{D}\left(\overline{u_2}\cdot F_{p-1}(u_1)\right),\\
\mathcal{R}_3&=P_K\mathcal{D}\left(u_2\cdot \mathbb{P}_{\neq 0}\Omega\right),\\
\mathcal{R}_4&=P_K\mathcal{D}\left\{u_2u_4\int_0^1F_{p-2}(u_3+\theta u_4)\,\mathrm{d}\theta+u_2\overline{u_4}\int_0^1F_{p-2}(u_3+\theta u_4)\,\mathrm{d}\theta\right\}\\
&+P_K\mathcal{D}\left\{\int_0^1\left[F_{p-2}(u_1+\theta u_2)u_2^2+2F_{p-2}(u_1+\theta u_2)u_2\overline{u_2}+F_{p-2}(u_1+\theta u_2)(\overline{u_2})^2\right] \theta\,\mathrm{d}\theta\right\}.
\end{align*} Let\[v(t)=\exp\left(i\int_0^t h(t')\mathrm{d}t'\right)\cdot P_K\mathcal{D}u(t),\] then we have that
\[(i\partial_{t}+\Delta_{\beta})v=\exp\left(i\int_0^t h(t')\mathrm{d}t'\right)\cdot\mathcal{R}:=\mathcal{R}',\] so \[v(t)=e^{it\Delta_{\beta}}v(0)-i\int_0^t e^{i(t-t')\Delta_{\beta}}\mathcal{R}'(t')\,\mathrm{d}t'\]  for $0\leq t\leq\varepsilon$, which gives that \[\|\chi(t)(v(t)-e^{it\Delta_{\beta}}v(0))\|_{X^{1,b}}\lesssim \|\chi(t)\mathcal{R}'\|_{X^{1,b-1}}.\]

We next proceed to estimate $\mathcal{R}'$. We will denote $\mathcal{R}_j'$ the term in $\mathcal{R}'$ corresponding to $\mathcal{R}_j$.

\bigskip

\noindent \underline{Step 2: estimate of $\mathcal{R}_1'$ and $\mathcal{R}_3'$.} First, we claim that \[\|\chi(t)\mathcal{R}_4'\|_{X^{1,b-1}}\lesssim K\|\mathcal{R}_4'\|_{L_{t,x}^{10/7+}}\lesssim K^{\sigma\gamma_0+}.\] In fact, we only need to consider $\mathcal{R}_4$. For the term $u_2u_4F_{p-2}(u_3+\theta u_4)$ (the other term being similar), one can bound\[K\left\|P_K\mathcal{D}\left(u_2u_4F_{p-2}(u_3+\theta u_4)\right)\right\|_{L_{t,x}^{10/7+}}\lesssim K^{o(1)}\cdot\|K\mathcal{D}u_2\|_{L_{t,x}^{10/3}}\|\mathcal{D}u_4\|_{L_{t,x}^{q_0}}(\|\mathcal{D}u_3\|_{L_{t,x}^{10-}}+\|\mathcal{D}u_4\|_{L_{t,x}^{10-}})^{p-2},\] which is bounded by $K^{\sigma\gamma_0+}$ since $u_4$ has frequency $\geq K^{\gamma_0}$. 

Next, we prove that $\mathcal{R}_1'$ satisfies better estimates; in fact, to bound $\mathcal{R}_1$ we will write \[\nabla \mathcal{R}_1=P_K\mathcal{D}(F_{p-1}(u_1)\nabla u_1),\] and since $u_1$ is supported in frequency $\leq K/10$, we know actually that\[\nabla \mathcal{R}_1=P_K\mathcal{D}(P_{[K/4,K]}F_{p-1}(u_1)\cdot\nabla u_1).\] Thus by (\ref{nonlemd}) we have\[K\|\mathcal{R}_1\|_{L_{t,x}^{10/7+}}\lesssim K^{o(1)}\|\mathcal{D}\nabla u_1\|_{L_{t,x}^{10/3}}\left\|P_{[K/4,K]}\mathcal{D}F_{p-1}(u_1)\right\|_{L_{t,x}^{5/2+}}\lesssim K^{o(1)}\sum_{M}\min(1,K^{-1}M)\|\mathcal{D}P_{M}u_1\|_{L_{t,x}^{q_0}}\] using the fact that $\|\mathcal{D}u\|_{L_{t,x}^{10-}}\lesssim 1$. This implies $K\|\mathcal{R}_1\|_{L_{t,x}^{10/7+}}\lesssim K^{\sigma+}$, since by Strichartz we have $\|\mathcal{D}P_{M}u_1\|_{L_{t,x}^{q_0}}\lesssim M^{\sigma+}$.

\bigskip

\noindent \underline{Step 3: estimate of $\mathcal{R}_3'$.} In $\mathcal{R}_3$ we may replace $\mathbb{P}_{\neq 0}\Omega$ by $\mathbb{P}_{\neq 0}P_{\leq K'}\Omega$ (and $u_2$ by $P_{[K/4,4K]}u$), since\[\|\mathcal{D}P_{>K'}\Omega\|_{L_{t,x}^{5/2+}}\lesssim\sum_{M\geq K'}M^{o(1)}\sum_{L\leq K'}M^{-1}L\|\mathcal{D}P_Lu_3\|_{L_{t,x}^{q_0}}\lesssim \sum_{M\geq K'}M^{\sigma+}\lesssim K^{\sigma\gamma_0+}\] using the fact that $\|\mathcal{D}u_3\|_{L_{t,x}^{10-}}\lesssim 1$, which implies\[K\left\|P_K\mathcal{D}\left(u_2\cdot P_{>K'}\Omega\right)\right\|_{L_{t,x}^{10/7+}}\lesssim \|K\mathcal{D}u_2\|_{L_{t,x}^{10/3}}\|\mathcal{D}P_{>K'}\Omega\|_{L_{t,x}^{5/2+}}\lesssim K^{\sigma\gamma_0+}.\] Let $H(t)=\int_0^{t}h(t')\mathrm{d}t'$, $\Omega'=\mathbb{P}_{\neq 0}P_{\leq K'}\Omega$ and $w=K\max(1,K/N)P_{[K/4,4K]}u$, we have $\|w\|_{X^{0,b}}\lesssim 1$ because $\|\mathcal{D}u\|_{X^{1,b}}\lesssim1$, and $\Omega'$ and $w$ are compactly supported in time. To bound $\mathcal{R}_3'$ we only need to bound \[\min(1,N/K)\|\chi(t)e^{iH(t)}P_K\mathcal{D}(w\cdot \Omega')\|_{X^{0,b-1}}\sim \|\chi(t)e^{iH(t)}P_K(w\cdot \Omega')\|_{X^{0,b-1}},\] by definition of $\mathcal{D}$. Choose some $z$ such that $\|z\|_{X^{0,1-b}}\leq1 $, by duality we only need to bound the quantity \begin{equation}\label{defj}\mathcal{J}:=\int \chi(t)e^{iH(t)}\overline{z}\cdot P_K( w\cdot \Omega')=\sum_{k_1=k_2+k_3}\int_{\xi_1=\xi_0+\xi_2+\xi_3}\widehat{\chi e^{iH}}(\xi_0)\overline{\widehat{P_Kz}(k_1,\xi_1)}\widehat{w}(k_2,\xi_2)\widehat{\Omega'}(k_3,\xi_3).\end{equation} Note that we always have $|k_3|\lesssim K^{\gamma_0}$ in the integral (\ref{defj}). Let $P=\chi e^{iH}$, then clearly $P\in L^2$; moreover $\partial_tP=(ih\chi+\chi')e^{itH}$ also belongs to $L^2$, since $|h(t)|\lesssim\|u(t)\|_{L_x^{p-1}}^{p-1}\lesssim 1$ by Sobolev. This gives by H\"{o}lder that\begin{equation}\label{exp}\|\langle \xi\rangle^{(1/2)-}\widehat{P}(\xi)\|_{L^1}\lesssim 1.\end{equation}

Now, choose $\gamma_1>4\gamma_0+$ to be determined. If the integral (\ref{defj}) is restricted to the region $|\xi_0|\gtrsim K^{\gamma_1}$ by inserting a suitable cutoff function $(1-\chi)(K^{-\gamma_1}\xi_0)$, then using H\"{o}lder, the corresponding contribution will be bounded by\[|\mathcal{J}_1|\lesssim\left\|\mathcal{F}^{-1}\left(\widehat{\chi e^{iH}}(\xi_0)(1-\chi)(K^{-\gamma_1}\xi_0)\right)\right\|_{L_{t}^{\infty}}\cdot \|P_Kz\|_{L_{t,x}^{10/3-}}\|w\|_{L_{t,x}^{10/3}}\|\Omega'\|_{L_{t,x}^{5/2+}}\lesssim K^{-\gamma_1/2+},\] since\[\left\|\widehat{\chi e^{iH}}(\xi_0)(1-\chi)(K^{-\gamma_1}\xi_0)\right\|_{L^1}\lesssim K^{-\gamma_1/2+}\] by (\ref{exp}), and $\|\Omega'\|_{L_{t,x}^{5/2+}}\lesssim\|u_3\|_{L_{t,x}^{5(p-1)/2+}}^{p-1}\lesssim 1$. Now we only need to study \begin{equation}\label{defj2}\mathcal{J}_2:=\sum_{k_1=k_2+k_3}\int_{\xi_1=\xi_0+\xi_2+\xi_3}\widehat{P^*}(\xi_0)\overline{\widehat{P_Kz}(k_1,\xi_1)}\widehat{w}(k_2,\xi_2)\widehat{\Omega'}(k_3,\xi_3),\end{equation} where \[\widehat{P^*}(\xi_0)=\widehat{\chi e^{iH}}(\xi_0)\chi(K^{-\gamma_1}\xi_0),\] and we easily see that\[|P^*(t)|\lesssim(1+|t|)^{-10}.\] Next, if the integral (\ref{defj2}) is restricted to the region $|\xi_1+ 2\pi Q(k_1)|\gtrsim K^{\gamma_1}$ by inserting a cutoff function $(1-\chi)(K^{-\gamma_1}(\xi_1+ 2\pi Q(k_1)))$, then we define $z'$ by \[\widehat{z'}(k_1,\xi_1)=\widehat{P_Kz}(k_1,\xi_1)\cdot (1-\chi)(K^{-\gamma_1}(\xi_1+ 2\pi Q(k_1))),\] and use\[\|z'\|_{L_{t,x}^2}\lesssim K^{-\gamma_1(1-b)}\|z\|_{X^{0,1-b}}\lesssim K^{-\gamma_1/2+}\] to bound the corresponding contribution by \[|\mathcal{J}_3|\lesssim\|P^*\|_{L_{t}^{\infty}}\cdot \|z'\|_{L_{t,x}^{2}}\|w\|_{L_{t,x}^{10/3}}\|\Omega'\|_{L_{t,x}^{5}}\lesssim K^{-\gamma_1/2+}\|P_{\leq K'}u\|_{L_{t,x}^{5(p-1)}}^{p-1}\lesssim K^{-\gamma_1/2+(p-3)\gamma_0/2+}.\] The same estimate holds, with $o(1)$ differences in the power of $K$, if (\ref{defj2}) is restricted to the region $|\xi_2+ 2\pi Q(k_2)|\gtrsim K^{\gamma_1}$. Now we may replace $P_Kz$ by $z'':=P_Kz-z'$, and $w$ by $w''$ which is defined similarly, and reduce to estimating  \begin{equation}\label{defj4}\mathcal{J}_4:=\sum_{k_1=k_2+k_3}\int_{\xi_1=\xi_0+\xi_2+\xi_3}\widehat{P^*}(\xi_0)\overline{\widehat{z''}(k_1,\xi_1)}\widehat{w''}(k_2,\xi_2)\widehat{\Omega'}(k_3,\xi_3).\end{equation} Note that $z''$ and $w''$ still satisfy the $X^{0,1-b}$ and $X^{0,b}$ bounds.

Next, define the operators $\mathcal{P}_L$ and $\mathcal{Q}_L$ as follows:\[\widehat{\mathcal{P}_LF}(k,\xi)=\chi(L^{-1}\xi)\widehat{F}(k,\xi),\qquad\widehat{\mathcal{Q}_LF}(k,\xi)=(1-\chi(L^{-1}\xi))\widehat{F}(k,\xi),\] we decompose $\Omega'=\Omega_1'+\Omega_2'$ where \[\Omega_1'=\frac{p+1}{2}\mathbb{P}_{\neq 0}P_{\leq K'}|\mathcal{P}_{K^{\gamma_1/2}}u_3|^{p-1},\] and \[\Omega_2'=\mathbb{P}_{\neq 0}P_{\leq K'}\bigg(\mathcal{Q}_{K^{\gamma_1/2}} u_3\cdot \int_0^1 F_{p-2}(\mathcal{P}_{K^{\gamma_1/2}}u_3+\theta \mathcal{Q}_{K^{\gamma_1/2}}u_3)\,\mathrm{d}\theta+\overline{\mathcal{Q}_{K^{\gamma_1/2}}u_3}\cdot \int_0^1 F_{p-2}(\mathcal{P}_{K^{\gamma_1/2}}u_3+\theta \mathcal{Q}_{K^{\gamma_1/2}}u_3)\,\mathrm{d}\theta\bigg).\] For $\Omega_2'$ one has \begin{multline}\|\Omega_2'\|_{L_{t,x}^{5/2}([0,1]\times\mathbb{T}^3)}\lesssim\|\mathcal{Q}_{K^{\gamma_1/2}}u_3\|_{L_{t,x}^{5/2}([0,1]\times\mathbb{T}^3)}(\|\mathcal{P}_{K^{\gamma_1/2}}u_3\|_{L_{t,x}^{\infty}([0,1]\times\mathbb{T}^3)}+\|\mathcal{Q}_{K^{\gamma_1/2}}u_3\|_{L_{t,x}^{\infty}([0,1]\times\mathbb{T}^3)})^{p-2}\\\lesssim K^{-\gamma_1/5+(p-2)\gamma_0/2+},\end{multline} since by interpolation\footnote{Namely, by interpolating between $X^{1,0}\hookrightarrow L_{t}^2L_x^{5/2}([0,1]\times\mathbb{T}^3)$ and $X^{1,1/2+}\hookrightarrow L_t^{\infty}L_x^{5/2}([0,1]\times\mathbb{T}^3)$.}\[\|\mathcal{Q}_{K^{\gamma_1/2}}u_3\|_{L_{t,x}^{5/2}([0,1]\times\mathbb{T}^3)}\lesssim K^{o(1)}\|\mathcal{Q}_{K^{\gamma_1/2}}u_3\|_{X^{0,1/10+}}\lesssim K^{-\gamma_1/5+};\] (notice that $\mathcal{F}\mathcal{Q}_{K^{\gamma_1/2}}u_3(k,\xi)$ is supported where $|k|\lesssim K^{\gamma_0}$ and $|\xi|\gtrsim K^{\gamma_1/2}$, on which $|\xi+ 2\pi Q(k)|\gtrsim K^{\gamma_1/2}$ since $\gamma_1>4\gamma_0+$), and by H\"{o}lder\[\|\mathcal{P}_{K^{\gamma_1/2}}u_3\|_{L_{t,x}^{\infty}([0,1]\times\mathbb{T}^3)}+\|\mathcal{Q}_{K^{\gamma_1/2}}u_3\|_{L_{t,x}^{\infty}([0,1]\times\mathbb{T}^3)}\lesssim \|u_3\|_{L_{t,x}^{\infty}(\mathbb{R}\times\mathbb{T}^3)}\lesssim K^{\gamma_0/2+}.\] 
We also have
 \[\|\Omega_2'\|_{L_{t,x}^{5/2}(\mathbb{R}\times\mathbb{T}^3)}\lesssim K^{-\gamma_1/5+(p-2)\gamma_0/2+}\]
uniformly in $n$, thus the contribution given by $\Omega_2'$ is bounded by\begin{align*}|\mathcal{J}_5|& \lesssim\bigg|\int_{\mathbb{R}\times\mathbb{T}^3} P^*(t)\cdot \overline{z''}\cdot w''\cdot \Omega_2'\bigg|\\ & \lesssim\left\|\frac{z''}{1+|t|^2}\right\|_{L_{t,x}^{10/3-}(\mathbb{R}\times\mathbb{T}^3)} \left\|\frac{w''}{1+|t|^2}\right\|_{L_{t,x}^{10/3}(\mathbb{R}\times\mathbb{T}^3)}\left\| \Omega_2' \right\|_{L_{t,x}^{5/2+}(\mathbb{R}\times\mathbb{T}^3)} \\ & \lesssim K^{-\gamma_1/5+(p-2)\gamma_0/2+}.\end{align*}
 Moreover, the term $\Omega_1'$ can be decomposed as $\Omega^*:=\mathcal{Q}_{K^{\gamma_1}}\Omega_1'$ and $\Omega'':=\mathcal{P}_{K^{\gamma_1}}\Omega_1'$. For the term $\Omega^*$ one has \[\|\Omega^*\|_{L_{t,x}^{5/2+}(\mathbb{R}\times\mathbb{T}^3)}\lesssim K^{-\gamma_1}\|\partial_t \Omega_1'\|_{L_{t,x}^{5/2+}(\mathbb{R}\times\mathbb{T}^3)}\lesssim K^{-\gamma_1}\|\mathcal{P}_{K^{\gamma_1/2}}u_3\|_{L_{t,x}^{10-}(\mathbb{R}\times\mathbb{T}^3)}^{p-2}\|\partial_t \mathcal{P}_{K^{\gamma_1/2}}u_3\|_{L_{t,x}^{q_0}(\mathbb{R}\times\mathbb{T}^3)}\lesssim K^{-\gamma_1/2}\] using the fact that \[\|\mathcal{P}_{K^{\gamma_1/2}}u_3\|_{L_{t,x}^{10-}(\mathbb{R}\times\mathbb{T}^3)}\lesssim \|u_3\|_{L_{t,x}^{10-}}\lesssim 1;\quad \|\partial_t\mathcal{P}_{K^{\gamma_1/2}}u_3\|_{L_{t,x}^{q_0}(\mathbb{R}\times\mathbb{T}^3)}\lesssim K^{\gamma_1/2}\|u_3\|_{L_{t,x}^{q_0}}\lesssim K^{\gamma_1/2},\] so the corresponding contribution is \[|\mathcal{J}_6|\lesssim\bigg|\int_{\mathbb{R}\times\mathbb{T}^3} P^*(t)\cdot \overline{z''}\cdot w''\cdot \Omega^*\bigg|\lesssim\left\|\frac{z''}{1+|t|^2}\right\|_{L_{t,x}^{10/3-}(\mathbb{R}\times\mathbb{T}^3)} \left\|\frac{w''}{1+|t|^2}\right\|_{L_{t,x}^{10/3}(\mathbb{R}\times\mathbb{T}^3)}\left\|\Omega^*\right\|_{L_{t,x}^{5/2+}(\mathbb{R}\times\mathbb{T}^3)} \lesssim K^{-\gamma_1/2+}.\]

Finally, we are left with the term\begin{equation}\label{defj7}\mathcal{J}_7:=\int_{\mathbb{R}\times\mathbb{T}^3}P^*(t)\cdot \overline{z''}\cdot w''\cdot \Omega''=\sum_{k_1=k_2+k_3}\int_{\xi_1=\xi_0+\xi_2+\xi_3}\widehat{P^*}(\xi_0)\overline{\widehat{z''}(k_1,\xi_1)}\widehat{w''}(k_2,\xi_2)\widehat{\Omega''}(k_3,\xi_3).\end{equation} For this term, first we have \[|\mathcal{J}_7|\lesssim\left\|\frac{z''}{1+|t|^2}\right\|_{L_{t,x}^{10/3-}(\mathbb{R}\times\mathbb{T}^3)} \left\|\frac{w''}{1+|t|^2}\right\|_{L_{t,x}^{10/3}(\mathbb{R}\times\mathbb{T}^3)}\left\|\Omega''\right\|_{L_{t,x}^{5/2+}(\mathbb{R}\times\mathbb{T}^3)} \lesssim K^{o(1)},\] since by Strichartz, \[\|\Omega''\|_{L_{t,x}^{5/2+}(\mathbb{R}\times\mathbb{T}^3)}\lesssim \|\Omega_1'\|_{L_{t,x}^{5/2+}(\mathbb{R}\times\mathbb{T}^3)}\lesssim\left\|\mathcal{P}_{K^{\gamma_1/2}}u_3\right\|_{L_{t,x}^{5(p-1)/2+}(\mathbb{R}\times\mathbb{T}^3)}^{p-1}\lesssim\|u_3\|_{L_{t,x}^{5(p-1)/2+}}^{p-1}\lesssim 1.\] Moreover, by the definition of these factors, we know that in the $\xi$-integral (\ref{defj7}), we must have \begin{equation}\label{resnt}\max(|\xi_0|,|\xi_1+ 2\pi Q(k_1)|,|\xi_2+ 2\pi Q(k_2)|,|\xi_3+ 2\pi Q(k_3)|)\ll K^{\gamma_1}.\end{equation} This gives that\[\left|Q(k_1)-Q(k_2)-Q(k_3)\right|\ll K^{\gamma_1},\] which gives that\begin{equation}\label{plane}|Q(k_1,k_3)|\ll K^{\gamma_1},\end{equation} since $k_3=k_1-k_2$ and $|Q(k_3)|\lesssim K^{2\gamma_0}\ll K^{\gamma_1}$, where \[Q(\ell,m):=\sum_{i=1}^3\beta_i\ell_im_i\] denotes the bilinear form corresponding to $Q(k)$. Moreover, since $k_3\neq 0$, we get $k_1\in\mathcal{Y}$, where\[\mathcal{Y}:=\bigcup_{0\neq\ell\in\mathbb{Z}^3,|\ell|\lesssim K^{\gamma_0}}\big\{k\in\mathbb{R}^3:|k|\lesssim K, |Q(k,\ell)|\ll K^{\gamma_1}\big\}\subset\mathbb{R}^3\] is the union of at most $O(K^{3\gamma_0})$ rectangular cuboids of dimensions $K\times K\times O(K^{\gamma_1})$.

This completes the estimate for $\mathcal{R}_3'$. The estimate for $\mathcal{R}_2'$ is done in completely analogous way; in fact, one may first replace the $F_{p-1}(u_1)$ factor by $P_{\leq K/10}F_{p-1}(u_1)$, then argue in exactly the same way as above, the only difference being that we now have $\xi_1+\xi_2=\xi_0+\xi_3$ in the integral (\ref{defj}) due to the presence of $\overline{u_2}$,  \[\max(|\xi_0|,|\xi_1+ 2\pi Q(k_1)|,|\xi_2+2\pi Q(k_2)|,|\xi_3+2\pi Q(k_3)|)\gtrsim K^{2}\] is always true.
\bigskip

\noindent \underline{Step 4: from the estimates on $\mathcal{R}$ to the desired inequality.} Summing up, we get that $\mathcal{R}'$ can be decomposed into two parts, \[\mathcal{R}'=\mathcal{R}''+\widetilde{\mathcal{R}},\] where \[\left\|\chi(t)\mathcal{R}''\right\|_{X^{1,b-1}}\lesssim K^{o(1)}K^{\max(\sigma\gamma_0,-\gamma_1/5+(p-2)\gamma_0/2)},\] and \[\|\chi(t)\widetilde{\mathcal{R}}\|_{X^{1,b-1}}\lesssim K^{o(1)} ,\quad\mathrm{supp}(\mathcal{F}\widetilde{\mathcal{R}})\subset\mathcal{Y}\times\mathbb{R}.\] Using Proposition \ref{linear}, one gets that\begin{equation}\label{approx}\left\|v(t)-e^{it\Delta_{\beta}}v(0)\right\|_{L_{t,x}^{q_0}([0,\varepsilon]\times\mathbb{T}^3)}\lesssim K^{\sigma+}\cdot\max\left(K^{\sigma\gamma_0},K^{-\gamma_1/5+(p-2)\gamma_0/2},K^{3\gamma_0-(p-3)(1-\gamma_1)/6}\right).\end{equation} 
Optimizing the right hand side  leads to choosing $\gamma_0 = \frac{p-3}{3(p+5)}$ and $\gamma_1 = 6 \gamma_0$, which gives
\begin{equation}\label{approx2}\left\|v(t)-e^{it\Delta_{\beta}}v(0)\right\|_{L_{t,x}^{q_0}([0,\varepsilon]\times\mathbb{T}^3)}\lesssim K^{\sigma+\theta_1+}.\end{equation} By time translation, one also gets that \begin{equation}\label{approx3}\left\|v(t)-e^{i(t-m\varepsilon)\Delta_{\beta}}v(m\varepsilon)\right\|_{L_{t,x}^{q_0}([m\varepsilon,(m+1)\varepsilon]\times\mathbb{T}^3)}\lesssim K^{\sigma+\theta_1+}.\end{equation} for each $0\leq m\leq K^{\gamma}$. 
Moreover, using the same arguments as above, one can also prove that\begin{equation}\label{approx4}\sup_{n\in\mathbb{Z}}\left\|e^{i(n\varepsilon+t)\Delta_{\beta}}\left[v((m+1)\varepsilon)-e^{i\varepsilon\Delta_{\beta}}v(m\varepsilon)\right]\right\|_{L_{t,x}^{q_0}([0,\varepsilon]\times\mathbb{T}^3)}\lesssim K^{\sigma+\theta_1+}\end{equation} for each $0\leq m\leq K^{\gamma}$.  In fact, by time translation we may assume $m=0$, so \[v(\varepsilon)-e^{i\varepsilon\Delta_\beta}v(0)=-i\int_0^\varepsilon e^{i(\varepsilon-t')\Delta_{\beta}}\mathcal{R}'(t')\,\mathrm{d}t'.\] Using Proposition \ref{linear}, part (3), and the decomposition $\mathcal{R}'=\widetilde{R}+\mathcal{R}''$ above, we can decompose $v(\varepsilon)-e^{i\varepsilon\Delta_{\beta}}v(0)$ into two terms, one having $H^1$ norm bounded by \[K^{o(1)}K^{\max(\sigma\gamma_0,-\gamma_1/5+(p-2)\gamma_0/2)},\] 
 the other having bounded $H^1$ norm and Fourier transform supported in $\mathcal{Y}$. Then, (\ref{approx4}) follows from Strichartz. Combining (\ref{approx3}) and (\ref{approx4}), one easily gets that\begin{equation}\begin{aligned}\sum_{m=0}^{K^{\gamma}}\left\|v\right\|_{L_{t,x}^{q_0}([m\varepsilon,(m+1)\varepsilon]\times\mathbb{T}^3)}&\lesssim K^{\sigma+\theta_1+\gamma+}+\sum_{m=0}^{K^\gamma}\|e^{i(t-m\varepsilon)\Delta_\beta}v(m\varepsilon)\|_{L_{t,x}^{q_0}([m\varepsilon,(m+1)\varepsilon]\times\mathbb{T}^3)}\\&\lesssim K^{\sigma+\theta_1+\gamma+}+\sum_{m=0}^{K^{\gamma}}\|e^{it\Delta_{\beta}}P_K\mathcal{D}u(0)\|_{L_{t,x}^{q_0}([m\varepsilon,(m+1)\varepsilon]\times\mathbb{T}^3)}\\&+\sum_{m=0}^{K^{\gamma}}\sum_{j=0}^{m-1}\|e^{i(t-(j+1)\varepsilon)\Delta_\beta}v((j+1)\varepsilon)-e^{i(t-j\varepsilon)\Delta_\beta}v(j\varepsilon)\|_{L_{t,x}^{q_0}([m\varepsilon,(m+1)\varepsilon]\times\mathbb{T}^3)}\\
&\lesssim K^{\sigma+\theta_1+2\gamma+}+\sum_{m=0}^{K^{\gamma}}\|e^{it\Delta_{\beta}}P_K\mathcal{D}u(0)\|_{L_{t,x}^{q_0}([m\varepsilon,(m+1)\varepsilon]\times\mathbb{T}^3)}.\end{aligned}\end{equation}
By the long-time Strichartz estimate (part (6) of Proposition \ref{linear}) combined with H\"{o}lder in $m$, one gets that\[\sum_{m=0}^{K^{\gamma}}\left\|v\right\|_{L_{t,x}^{q_0}([m\varepsilon,(m+1)\varepsilon]\times\mathbb{T}^3)}\lesssim K^{\sigma+\theta_1+2\gamma+}+K^{\sigma+\gamma-\gamma/q_0+}.\]
This completes the proof.
\end{proof}
\begin{proof}[Proof of Theorem \ref{main} in the irrational case] By Proposition \ref{long}, choosing $\gamma=q_0|\theta_1|/(q_0+1)$, one has that, as long as $T>K^{|\theta_1|+}$ and $E[\mathcal{D}u(t)]\lesssim 1$ for all $t\in[0,T]$, \[\sum_{m=0}^{\varepsilon^{-1}T}\|P_K\mathcal{D}u\|_{L_{t,x}^{q_0}([m\varepsilon,(m+1)\varepsilon]\times\mathbb{T}^3)}\lesssim TK^{\sigma+}\cdot K^{\frac{\theta_1}{q_0+1}} .\]

Given initial data, choose $N$ such that $N\sim A$ and $E[\mathcal{D}u(0)]\leq 10E$. By Propositions \ref{incre}, as long as \[\sup_{0\leq t\leq T}E[\mathcal{D}u(t)]\leq 20E\] for all $t\in[0,T]$, one has that

\begin{align*}\sup_{0\leq t\leq T}E[\mathcal{D}u(t)] - 10E & \lesssim N^{\max(p-5,-1)}T+N^{o(1)}\sum_K K^{o(1)}\min(1,N^{-1}K)\cdot TK^{\sigma+}\cdot K^{\frac{\theta_1}{q_0+1}}\\
& \lesssim N^{\max(p-5,-1)}T+N^{\sigma+\frac{\theta_1}{q_0+1}+}T.\end{align*}

 By a bootstrap argument, this gives that $E[\mathcal{D}u(t)]\lesssim 1$ and $\|u(t)\|_{H^2}\lesssim A$, up to time \[T=N^{\frac{5-p}{2}+\frac{\theta(p)}{2}},\] where $\theta(p)<2|\theta_1|/(q_0+1)$. By the same argument as in the general case, this implies (\ref{growth2}).
\end{proof}

\end{document}